\theoremstyle{plain}
\newtheorem{theorem}{Theorem}[section]
\newtheorem{lemma}[theorem]{Lemma}
\newtheorem{corollary}[theorem]{Corollary}
\newtheorem{proposition}[theorem]{Proposition}
\theoremstyle{definition}
\newtheorem{definition}{Definition}[section]
\numberwithin{equation}{section}
\DeclareMathOperator{\Rp}{Re}
\newcommand{\ds}{\displaystyle}
\DeclareMathOperator*{\esssup}{ess\,sup}
\DeclareMathOperator*{\essinf}{ess\,inf}
\begin{document}

\title[Variable Exponent Hardy and Bergman Spaces]%
{Equivalence among various variable exponent Hardy or Bergman spaces}
\author{Timothy Ferguson}
\thanks{Thanks to David Cruz-Uribe for helpful comments}
\address{Department of Mathematics\\University of Alabama\\Tuscaloosa, AL}
\email{tjferguson1@ua.edu}

\date{\today}

\begin{abstract}
  We study the question of when two weighted variable exponent Bergman
  spaces or Hardy spaces are equivalent.
  As an application, we show that variable exponent Hardy spaces have
  a close relation to classical Hardy spaces when
  the exponent is log-H\"{o}lder continuous and has bounded harmonic
  conjugate (when extended from its boundary values to be harmonic in the
  disc). 
  We use this to characterize Carleson measures for these variable exponent
  Hardy spaces.  We also prove under certain conditions
  an analogue of Littlewood subordination and a result on the boundedness
  of composition operators. 
\end{abstract}




\maketitle

\section{Introduction}
Let $p(z)$ be a measurable function defined on 
$\mathbb{D}$, the unit disc.
Let $p_- = \essinf p(z)$ and $p_+ = \esssup p(z)$. 
If $0 < p_- \leq p_+ < \infty$
we say that $p(\cdot)$ is a variable exponent on $\mathbb{D}$. 
Let $dA$ denote {\itshape normalized} Lebesgue area measure on the unit disc
and let $dA_\alpha(z) = (\alpha + 1) (1-|z|^2)^\alpha dA(z)$ for
$-1 < \alpha  < \infty$. 
Note that $dA_\alpha(\mathbb{D}) = 1$. 
We say that 
$f \in L^{p(\cdot)}_\alpha$ if 
\[
\rho_{p(\cdot)}(f) := \int_{\mathbb{D}} |f(z)|^{p(z)} \, dA_\alpha(z) < \infty.
\]
We can define the norm of a function $f \in L^{p(\cdot)}_\alpha$ by 
\[
\inf \left\{ \lambda > 0 : 
  \int_{\mathbb{D}} |f(z)/\lambda|^{p(z)} \, dA(z) < 1 \right\}.
\]
This norm is called the Luxemborg-Nakano norm and makes 
$L^{p(\cdot)}_\alpha$ into a Banach space if $p_{-} \geq 1$
(see \cite{DCU_VarLebesgueBook}). 

Recall the inequality $(a+b)^p \leq a^p + b^p$ that holds for 
$a,b \geq 0$ and $0 < p \leq 1$.  
Thus if $0 < p(\cdot) \leq 1$ then as in the constant exponent case the 
distance $d(f,g) = \rho_{p(\cdot)}(f-g)$ defines a translation invariant 
metric (in the sense of addition of vectors, i.e.\ functions) 
under which $L^{p(\cdot)}$ is complete.  Also, since $p_{-} > 0$, the 
Luxemborg-Nakano norm makes $L^{p(\cdot)}$ into a quasi-Banach space 
(which satisfies the axioms for a Banach space except that the triangle 
inequality only needs to hold up to a constant multiple).  
If $0 < p_{-} < 1 < p_{+} < \infty$ then the distance 
$d(f,g) = \rho_{p(\cdot)}[(f-g)\chi_{0 < p(\cdot) < 1}] + 
   \|(f-g)\chi_{p(\cdot) \geq 1}\|_{L^{p(\cdot)}}$ defines a translation 
   invariant metric on $L^{p(\cdot)}$ under which $L^{p(\cdot)}$ is complete.
   With this fact, it is not hard to see that $L^{p(\cdot)}$ is an
   $F$-space, and thus the closed-graph theorem applies to it
   \cite[II.2, Theorem 4]{DunfordSchwartzI}.
Also, the Luxemborg-Nakano norm still makes $L^{p(\cdot)}$ into a 
quasi-Banach space. 

We let $A^{p(\cdot)}_\alpha$ be the subspace of
$L^{p(\cdot)}_\alpha$ consisting of all
analytic functions in $L^{p(\cdot)}_\alpha$.
Since $\|\cdot\|_{L^{p_-}} \leq \|\cdot\|_{L^{p(\cdot)}}$ and
point evaluation is a bounded linear functional on $A^{p_-}$ 
with uniform bound on compact subsets, the
Bergman spaces $A^{p(\cdot)}_\alpha$ are closed.

More generally, we may define the space $A^{p(\cdot)}_{\mu}$ where
$\mu$ is a (not identically zero)
finite measure on the open unit interval by letting
the norm of a function in $L^{p(\cdot)}_\mu$ be
\[
\inf \left\{ \lambda > 0 : 
  \int_{0}^1 \frac{1}{2\pi}
     \int_0^{2\pi} |f(z)/\lambda|^{p(z)} \, d\theta \, d\mu(r) < 1 \right\}.
 \]
 We will not study the $A^{p(\cdot)}_\mu$ spaces in detail,
 so we leave aside the question
 of when $A^{p(\cdot)}_\mu$ is a closed subspace of $L^{p(\cdot)}_\mu$. 
 
We also define the integral mean
$M_{p(\cdot)}(r,f)$ of an function analytic in the unit disc
to be the Luxemborg-Nakano norm of $f$ restricted to the
circle of radius $r$ with normalized Lebesgue measure.  In other words,
\[
M_{p(\cdot)}(r,f) = \inf \left\{ \lambda > 0 : 
  \frac{1}{2\pi} \int_{0}^{2\pi} |f(re^{i\theta})/\lambda|^{p(re^{i\theta})} \,
    d\theta < 1 \right\}.
\]
We also define
\[
  M_{p(\cdot)}^{p(\cdot)}(r,f) =
 \frac{1}{2\pi} \int_{0}^{2\pi} |f(re^{i\theta})|^{p(r e^{i\theta})} \,
 d\theta.
\]
(Note that the superscript $p(\cdot)$ is not really an exponent, but is
suggestive of the notation $M_p^p(r,f)$ used with classical integral
means.)
We let $H^{p(\cdot)}$ be the class of all functions analytic in
$\mathbb{D}$ such that
\[
  \|f\|_{H,p(\cdot),\textrm{sup}} = \sup_{0 \leq r < 1} M_{p(\cdot)}(r,f) < \infty.
\]
For a log-H\"{o}lder continuous exponent $p$, we define 
\[
  \|f\|_{H,p(\cdot),\textrm{lim}} = \lim_{r \rightarrow 1} M_{p(\cdot)}(r,f).
\]
It is a consequence of
\cite[Theorem 5.11]{DCU_VarLebesgueBook}
that the limit in question always exists and
\[
  \|f\|_{H,p(\cdot),\textrm{lim}} \leq
  \|f\|_{H,p(\cdot),\textrm{sup}} \leq
  C  \|f\|_{H,p(\cdot),\textrm{lim}}
\]
for some constant $C$ depending only on $p(\cdot)$ but not on $f$. 
(These two norms are equal in the classical case because the integral
means are increasing). From now on, we will let
$\|f\|_{H,p(\cdot)}$ denote $\|f\|_{H,p(\cdot),\textrm{lim}}$ for
Hardy spaces with log-H\"{o}lder continuous exponent. 
  
We will also let $A^{p(\cdot)}_{-1}$ be an alternate notation for $H^{p(\cdot)}$. 
Note that if $f \in H^{p(\cdot)}$, then $f \in H^{p_-}$, so that
$f$ has nontangential limit almost everywhere.
Furthermore, by Fatou's lemma, if we let
$p(e^{i\theta}) = \liminf_{r \rightarrow 1^-} p(re^{i \theta})$, then
$f(e^{i\theta}) \in L^{p(e^{i\theta})}$. As with Bergman spaces, these
Hardy spaces can be made into $F$-spaces or quasi-Banach spaces, and
they are Banach spaces if $p_{-} \geq 1$.  

We study several questions in this paper.  First, we prove that the
$A^{p(\cdot)}_\alpha$ and $H^{p(\cdot)}$ spaces depend only on the values of
$p$ on the unit circle if $p$ satisfies a condition that is slightly
weaker than log-H\"{o}lder continuity on the closed unit circle.
Log-H\"{o}lder continuity is well known to be a
natural condition when working with variable exponent Lebesgue spaces
(see \cite{DCU_VarLebesgueBook}).

We then discuss the case of variable exponent Hardy spaces in more detail.
In the classical case, it is important that if $f \in H^p$ then
$f = B g$ for the Blaschke product $B$ formed from the zeros of $f$.
Also, $\|f\|_{H,p} = \|g\|_{H,p}$ and  $g^{p/2} \in H^2$ and
$\|g^{p/2}\|_{H,2} = \|g\|_{H,p}^{p/2}$.  We prove an analogous result for
variable exponent Hardy spaces, under the condition that
the exponent is log-H\"{o}lder continuous and that when
restricted to the unit circle and then extended harmonically into the disc,
the exponent has
bounded harmonic conjugate.  This allows us to pass immediately from
certain classical results about Hardy spaces to corresponding results for
these variable exponent Hardy spaces.  Two examples are the
characterization of Carleson measures and an analogue of the Littlewood
subordination theorem.
This allows us to prove that composition operators are
bounded between certain variable exponent Hardy spaces.

In the last section, we discuss a case for the unweighted Bergman space
where the exponent $p$ is assumed to be radial and
$q \leq p(r) \leq \esssup p(r) < \infty$ for some
$q > 0$.  (We do not need to assume any continuity on $p$.) We find a
necessary and sufficient condition for $A^{p(\cdot)}$ to equal $A^q$.
This leads to some interesting and surprising corollaries, such as
Corollary \ref{cor:limsup_p_big}.

\section{Uniformly Radially Log-H\"{o}lder Continuous Exponents}

In this section, we discuss
uniformly radially log-H\"{o}lder continuous exponents
and prove that the Bergman and Hardy spaces they define depend only
on the values of the exponents on the unit circle.  The proof is very
general and uses only the fact that functions in these spaces satisfy
a bound of the form $|f(re^{i\theta})| = O((1-r)^{-a})$ for some
$a > 0$.  We then discuss some applications of the result.  

\begin{definition}
  Let $p$ be an exponent on 
  $\overline{\mathbb{D}}$ with $0 < p_- \leq p_+ < \infty$ and 
  let $0 \leq R < 1$ be fixed.  
  Suppose that there is an absolute constant $C$ such that for each fixed 
  $\theta$, one has
  \[
    |p(re^{i\theta}) -  p(e^{i\theta})| \leq \frac{C}{-\log(1-r)}
  \]
  for $R \leq r < 1$. 
We then say that $p$ is uniformly radially log-H\"{o}lder continuous. 
\end{definition}

\begin{definition}
Let $p$ be an exponent on 
$\overline{\mathbb{D}}$ with $0 < p_- \leq p_+ < \infty$.
We say that $p$ is log-H\"{o}lder continuous if there is a constant
$C$ such that for all $z,w \in \overline{\mathbb{D}}$ with $|z-w| < 1$
we have
\[
  |p(z)-p(w)| \leq \frac{C}{\log \frac{1}{|z-w|}}.
  \]
\end{definition}
A slight problem with the above definition is that the right hand side
of the inequality becomes infinite as $|z-w| \rightarrow 1$.  However, we
still get a uniform bound on $|z-w|$ by applying the inequality first to
$z$ and $(2z+w)/3$ and then to $(2z+w)/3$ and $(z+2w)/3$, and
finally to $(z+2w)/3$ and $w$. An alternate route
is to note that we can change the inequality in the definition to
\[
  |p(z)-p(w)| \leq \frac{C'}{\log \frac{a}{|z-w|}}.
  \]
  for any $a > 2$ and any $z, w \in \overline{\mathbb{D}}$ without
  affecting the class of log-H\"{o}lder continuous functions.
  Furthermore, $C'$ can be bounded by a number depending on
  $C$ and $a$, and $C$ can be bounded by a number depending on
  $C'$ and $a$. 
  
\begin{theorem}\label{thm:logholder_equiv}
  Let $-1 \leq \alpha < \infty$. 
  Suppose that $p(z)$ is a uniformly radially log-H\"{o}lder
  continuous exponent and  
  let $\widehat{p}(re^{i\theta}) = p(e^{i\theta})$.
  Then  
$A_\alpha^{p(\cdot)} = A_\alpha^{\widehat{p}(\cdot)}$. 
\end{theorem}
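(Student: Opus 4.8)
The plan is to show that for a single analytic function $f$ the two modulars $\rho_{p(\cdot)}(f)$ and $\rho_{\widehat{p}(\cdot)}(f)$ are finite simultaneously; since $A^{p(\cdot)}_\alpha$ and $A^{\widehat{p}(\cdot)}_\alpha$ are exactly the analytic functions of finite respective modular, this yields the asserted equality of sets. First I would record the growth estimate mentioned in the section introduction. Set $q = \min(p_-, \essinf \widehat{p}) > 0$. Because $dA_\alpha$ is a probability measure, one has $\|\cdot\|_{L^q} \leq \|\cdot\|_{L^{p(\cdot)}}$ and $\|\cdot\|_{L^q} \leq \|\cdot\|_{L^{\widehat{p}(\cdot)}}$, so any $f$ lying in either space belongs to the constant-exponent space $A^q_\alpha$ and therefore satisfies $|f(re^{i\theta})| \leq C_f (1-r)^{-a}$ for some $a > 0$ (one may take $a = (\alpha+2)/q$, or $a = 1/q$ in the Hardy case $\alpha = -1$). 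The constant $C_f$ depends on $f$, but this is harmless for proving equality of sets.

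The heart of the argument is a pointwise comparison of the integrands $|f(z)|^{p(z)}$ and $|f(z)|^{\widehat{p}(z)}$, whose ratio is $|f(z)|^{p(z) - \widehat{p}(z)}$. Writing $L = -\log(1-r)$ and using the uniform radial log-H\"{o}lder bound $|p(re^{i\theta}) - p(e^{i\theta})| \leq C/L$, valid for $R \leq r < 1$, on the set where $|f(z)| \geq 1$ I would estimate
\[
  \bigl| (p(z) - \widehat{p}(z)) \log|f(z)| \bigr|
    \leq \frac{C}{L}\bigl( \log C_f + aL \bigr)
    \leq \frac{C\,|\log C_f|}{L_0} + aC =: M,
\]
where $L_0 = -\log(1-R) > 0$ (enlarging $R$ if necessary so that $R > 0$). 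Hence $e^{-M} \leq |f(z)|^{p(z)-\widehat{p}(z)} \leq e^{M}$ on $\{|f| \geq 1\} \cap \{R \leq |z| < 1\}$. This is the crucial cancellation: the logarithmic decay $C/L$ of the exponent difference exactly balances the at-most-logarithmic growth $\log|f| \lesssim aL$ coming from the polynomial growth bound, so the product stays bounded. It is precisely the matching of log-H\"{o}lder decay with polynomial growth that makes the theorem true.

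With this estimate in hand I would split $\rho_{p(\cdot)}(f)$ into three pieces: the integral over the compact disc $|z| \leq R$, where $f$ is bounded and both integrands are finite; the part of $R \leq |z| < 1$ where $|f| < 1$, on which $|f(z)|^{p(z)}, |f(z)|^{\widehat{p}(z)} \leq 1$ so that each integral is at most the total mass $dA_\alpha(\mathbb{D}) = 1$; and the part where $|f| \geq 1$, on which the displayed estimate gives $|f(z)|^{p(z)} \leq e^{M} |f(z)|^{\widehat{p}(z)}$ and conversely. Summing the three pieces shows $\rho_{p(\cdot)}(f)$ is finite if and only if $\rho_{\widehat{p}(\cdot)}(f)$ is finite, which is the desired equality of spaces. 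Equivalence of the two norms then follows from the closed-graph theorem, since both spaces are $F$-spaces in which norm convergence forces locally uniform convergence.

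The main obstacle to watch is the region near the zeros of $f$, where $|f|$ is small and the pointwise ratio of the integrands is uncontrolled; this is exactly why I isolate the set $\{|f| < 1\}$ and use only that the integrands are bounded there, rather than attempting a ratio estimate. For the Hardy case $\alpha = -1$ the same idea applies, but the comparison must be carried out for the Luxemborg-Nakano norms $M_{p(\cdot)}(r,f)$ on each circle $|z| = r$ rather than for a single modular over the disc; the pointwise exponent estimate above is what makes these circle norms comparable uniformly in $r$, after which one passes to the limit as $r \to 1$.
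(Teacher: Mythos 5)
Your proposal is correct and follows essentially the same route as the paper: the identical pointwise cancellation $(1-r)^{-|p(z)-\widehat{p}(z)|} \leq e^{C}$ balancing log-H\"{o}lder decay against the polynomial growth bound from \cite{Dragan_point_eval}, with the same split into $\{|f| \geq 1\}$ and $\{|f| < 1\}$ and the same reduction to circle means when $\alpha = -1$. The only difference is cosmetic: you prove set equality with an $f$-dependent constant $C_f$ and recover norm equivalence via the closed-graph theorem, whereas the paper normalizes $\|f\|_{A,p(\cdot)} = 1$ at the outset (so the growth bound holds with constant $1$) and reads off a uniform norm inequality directly, bypassing the closed-graph argument.
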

\begin{proof}
First let $-1 < \alpha < \infty$. 
  Suppose that $f \in A^{p(\cdot)}_\alpha$.  
We may assume without loss of generality that it has unit norm.
Then $f \in A^{p_-}_\alpha$ so from 
\cite{Dragan_point_eval},
\[
|f(re^{i\theta})| \leq \frac{1}{(1-r)^{(2+\alpha)/p_-}}.
\]
Note that
\[
(1-r)^{-|p(re^{i\theta}) - p(e^{i\theta})|} \leq 
(1-r)^{C/\log(1-r)} = e^C
\]
Thus if $|f(z)| \geq 1$ we have that 
\[
  |f(re^{i\theta})|^{p(re^{i\theta}) - p(e^{i\theta})} \leq e^{(2+\alpha)C/p_-}.
\]
Thus 
\[
\int_{\{|f(z)| \geq 1\}} |f(z)|^{\widehat{p}(z)} dA_\alpha(z) \leq 
e^{(2+\alpha)C/p_-}
\int_{\{|f(z)| \geq 1\}} |f(z)|^{p(z)} dA_\alpha(z).
\]
Since %
$\int_{\{|f(z)| < 1\}} |f(z)|^{\widehat{p}(z)} dA_\alpha(z)$
is bounded by $1$, we have that 
\[
  \rho_{\widehat{p}}(f) \leq 
  e^{2C/p_-} \rho_{p}(f) + 1 = e^{2C/p_-} + 1.
\]
Thus for any $f \in A^{p(\cdot)}$, one has that 
$\|f\|_{A,{\widehat{p}(\cdot)}} \leq (e^{(2+\alpha)C/p_-} + 1) \|f\|_{A,{p(\cdot)}}$. 
A similar argument shows that 
$\|f\|_{A,{p(\cdot)}} \leq (e^{(2+\alpha)C/\widehat{p}_-} + 1) \|f\|_{A,{\widehat{p}(\cdot)}}$. 

For the case $\alpha = -1$, performing similar calculations shows that
$M_{p(\cdot)}(r,f) \leq (e^{C/p_-}+1) M_{\widehat{p}(\cdot)}(r,f)$ and that
$M_{p(\cdot)}(r,f) \leq (e^{C/\widehat{p}_-}+1) M_{\widehat{p}(\cdot)}(r,f)$,
which
gives the result.%
\end{proof}

We need the following result found in \cite[Lemma 1]{tjf:bergprojbounds}.
\begin{lemma}\label{lemma:poissonbound}
  Let $p>1$ and $0<r<1$. Then 
\[
\frac{1}{2\pi} \int_0^{2\pi} \frac{1}{|1-re^{i\theta}|^p} d\theta %
\le 
\frac{\Gamma(p-1)}{\Gamma(p/2)^2} (1-r^2)^{1-p}.
\]
Furthermore, the bound is sharp, in the sense that the integral in question, 
divided by $(1-r^2)^{1-p}$, is always less than or equal to
$\tfrac{\Gamma(p-1)}{\Gamma(p/2)^2}$, but the quotient approaches 
$\tfrac{\Gamma(p-1)}{\Gamma(p/2)^2}$ as $r \rightarrow 1$. 
\end{lemma}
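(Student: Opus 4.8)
The plan is to evaluate the integral exactly as a Gaussian hypergeometric function and then extract the sharp constant by means of the Euler transformation. First I would write $|1-re^{i\theta}|^{-p} = (1-re^{i\theta})^{-p/2}(1-re^{-i\theta})^{-p/2}$ and expand each factor by the binomial series $(1-w)^{-s} = \sum_{n\ge 0}\frac{(s)_n}{n!}w^n$, where $(s)_n = s(s+1)\cdots(s+n-1)$ denotes the rising factorial. Taking $s=p/2$ and $w = re^{\pm i\theta}$ and multiplying the two absolutely convergent series, term-by-term integration over $\theta$ (justified because the series converge geometrically for $0<r<1$) annihilates all cross terms by orthogonality of $\{e^{in\theta}\}$, leaving
\[
\frac{1}{2\pi}\int_0^{2\pi}\frac{1}{|1-re^{i\theta}|^p}\,d\theta = \sum_{n=0}^\infty \left(\frac{(p/2)_n}{n!}\right)^2 r^{2n} = {}_2F_1\!\left(\tfrac p2,\tfrac p2;1;r^2\right),
\]
where I have used $(1)_n = n!$.

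The key maneuver is then the Euler transformation ${}_2F_1(a,b;c;x) = (1-x)^{c-a-b}\,{}_2F_1(c-a,c-b;c;x)$ applied with $a=b=p/2$, $c=1$, and $x=r^2$. Since $c-a-b = 1-p$, this gives
\[
(1-r^2)^{p-1}\,{}_2F_1\!\left(\tfrac p2,\tfrac p2;1;r^2\right) = {}_2F_1\!\left(1-\tfrac p2,1-\tfrac p2;1;r^2\right) = \sum_{n=0}^\infty \left(\frac{(1-p/2)_n}{n!}\right)^2 r^{2n}.
\]
The crucial point is that the coefficients $\bigl((1-p/2)_n/n!\bigr)^2$ are nonnegative irrespective of the sign of $1-p/2$, so the right-hand side is a nondecreasing function of $r^2$ on $[0,1)$. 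Hence the integral, divided by $(1-r^2)^{1-p}$, is bounded above by and increases to its limit as $r\to 1^-$, which is precisely the asserted sharpness, provided that limit equals the claimed constant.

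To identify the limit I would invoke Gauss's summation theorem ${}_2F_1(a,b;c;1) = \frac{\Gamma(c)\Gamma(c-a-b)}{\Gamma(c-a)\Gamma(c-b)}$, valid here since $c-a-b = p-1 > 0$; with $a=b=1-p/2$ and $c=1$ it yields exactly $\frac{\Gamma(p-1)}{\Gamma(p/2)^2}$, completing both the inequality and the sharpness. The one step requiring care is the passage $r\to 1^-$: because every term of the series is nonnegative, Abel's theorem (equivalently, monotone convergence) shows the limit equals the value of the series at $r^2=1$, and the convergence of that series is exactly the hypothesis $p>1$, since its terms decay like $n^{-p}$. I expect this limiting step — together with verifying that the degenerate cases in which $p$ is an even integer, where the transformed series terminates, are consistent with the general formula — to be the only genuinely delicate part; the hypergeometric identities themselves are standard and their application is routine.
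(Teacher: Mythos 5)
Your proof is correct, and there is nothing in the paper to compare it against line by line: the paper does not prove this lemma at all, but imports it as Lemma~1 of the author's companion paper \cite{tjf:bergprojbounds} on integral means of Bergman projections. Your argument is the natural (and almost certainly the intended) one: the identity $\frac{1}{2\pi}\int_0^{2\pi}|1-re^{i\theta}|^{-p}\,d\theta={}_2F_1\bigl(\tfrac p2,\tfrac p2;1;r^2\bigr)$ follows from the two binomial expansions and orthogonality exactly as you say; the Euler transformation with $a=b=p/2$, $c=1$ converts the quotient by $(1-r^2)^{1-p}$ into $\sum_{n\ge 0}\bigl((1-p/2)_n/n!\bigr)^2 r^{2n}$, whose coefficients are squares and hence nonnegative, giving monotone increase in $r$; and Gauss's theorem applies since $c-a-b=p-1>0$, yielding the constant $\Gamma(p-1)/\Gamma(p/2)^2$ as the monotone limit, which delivers both the inequality and its sharpness in one stroke. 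The points you flagged as delicate indeed check out: the terms decay like $n^{-p}$ by the standard Pochhammer asymptotics (so convergence at $r=1$ is exactly $p>1$), and when $p$ is an even integer the transformed series terminates and sums consistently (e.g.\ $p=2$ gives the quotient identically $1=\Gamma(1)/\Gamma(1)^2$, and $p=4$ gives $1+r^2\to 2=\Gamma(3)/\Gamma(2)^2$), so no separate treatment is needed.
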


\begin{theorem}
Suppose that $p$ and $q$ are two exponents in $\mathbb{D}$ 
 with 
$0 < p_{-}, q_{-}$ and $p_{+}, q_{+} < \infty$ and that 
$\essinf_{z \in D \cap \mathbb{D}} p(z) >
  \esssup_{z \in D \cap \mathbb{D}} q(z)$ where $D$ is a disc
centered on the boundary 
of the unit disc.  
Let $-1 \leq \alpha < \infty$. 
Then $A^p_\alpha \neq A^q_\alpha.$
\end{theorem}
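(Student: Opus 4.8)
The plan is to exhibit a single analytic function that lies in $A^q_\alpha$ but not in $A^p_\alpha$, exploiting the gap between $p$ and $q$ on $D$. Let $\zeta$ be the center of $D$, so $|\zeta| = 1$, and set $s = \esssup_{z \in D \cap \mathbb{D}} q(z)$ and $t = \essinf_{z \in D \cap \mathbb{D}} p(z)$, so that $0 < s < t < \infty$ by hypothesis. Put $\beta = (\alpha + 2)/t$ and consider
\[
  f(z) = (1 - \overline{\zeta}\, z)^{-\beta},
\]
taken with the principal branch, which is analytic on $\mathbb{D}$ because $\Rp(1 - \overline{\zeta}z) \geq 1 - |z| > 0$ there. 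The key elementary observation is that $1 - \overline{\zeta}z = \overline{\zeta}(\zeta - z)$, so $|f(z)| = |z - \zeta|^{-\beta}$; this ties the size of $f$ directly to the distance from the boundary point $\zeta$.

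First I would localize the problem. Writing $\mathbb{D} = (\mathbb{D} \setminus D) \cup (\mathbb{D} \cap D)$, on $\mathbb{D} \setminus D$ the quantity $|z - \zeta|$ is bounded below by the radius of $D$, so $f$ is bounded there and contributes a finite amount to every modular. Thus all membership questions are governed by $D \cap \mathbb{D}$. Shrinking $D$ to a smaller concentric disc if necessary—which only increases $t$ and decreases $s$, hence preserves $s < t$—I may assume the radius of $D$ is at most $1$, so that $|f(z)| \geq 1$ throughout $D \cap \mathbb{D}$.

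For membership in $A^q_\alpha$, on $D \cap \mathbb{D}$ we have $q(z) \leq s$ and $|f(z)| \geq 1$, hence $|f(z)|^{q(z)} \leq |f(z)|^s$, and the question reduces to whether $\int_{D \cap \mathbb{D}} |z-\zeta|^{-\beta s}\,dA_\alpha < \infty$. This is the standard test integral for $A^s_\alpha$ (for $\alpha = -1$ one uses Lemma \ref{lemma:poissonbound} together with the boundedness of $f$ away from $\zeta$), and it converges precisely because $\beta s = s(\alpha+2)/t < \alpha + 2$. When $\alpha = -1$ this shows the modular $M^{q(\cdot)}_{q(\cdot)}(r,f)$ is bounded in $r$, and the unit-ball property for bounded exponents then bounds the Luxemburg--Nakano means $M_{q(\cdot)}(r,f)$; in either case $f \in A^q_\alpha$. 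For non-membership in $A^p_\alpha$ the reverse comparison applies: on $D \cap \mathbb{D}$ we have $p(z) \geq t$ and $|f(z)| \geq 1$, so $|f(z)|^{p(z)} \geq |z-\zeta|^{-\beta t}$. Since $\beta t = \alpha + 2$ is exactly the critical exponent, $\int_{D \cap \mathbb{D}} |z-\zeta|^{-(\alpha+2)}\,dA_\alpha$ diverges logarithmically near $\zeta$. Hence $\rho_{p(\cdot)}(f) = \infty$ when $-1 < \alpha < \infty$, so $f \notin A^p_\alpha$; when $\alpha = -1$ the same estimate forces $M^{p(\cdot)}_{p(\cdot)}(r,f) \to \infty$ (the divergence of $M^t_t(r,f) \sim \log\frac{1}{1-r}$ comes entirely from the arc near $\zeta$, which lies in $D$), and unbounded modular implies unbounded norm, so $f \notin H^{p(\cdot)}$. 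Either way $A^p_\alpha \neq A^q_\alpha$.

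The main obstacle I anticipate is the critical-exponent computation: verifying that at the threshold $\beta t = \alpha + 2$ the test integral genuinely diverges rather than staying finite, and, in the Hardy case $\alpha = -1$, translating this divergence of the variable-exponent modular into the failure of the Luxemburg--Nakano norm to be finite. Everything else is bookkeeping: the localization to $D \cap \mathbb{D}$, the two one-sided comparisons between $|f|^{p(\cdot)}$ or $|f|^{q(\cdot)}$ and the constant-exponent powers $|f|^t$ and $|f|^s$, and the passage between modular and norm via the unit-ball property.
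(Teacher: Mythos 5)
Your proof is correct and takes essentially the same approach as the paper: the paper's proof also exhibits the test function $(1-z)^{-(2+\alpha)/s}$ (with $D$ centered at $1$) and uses the same implicit localization to $D \cap \mathbb{D}$. The only difference is that the paper picks $s$ \emph{strictly} between $\esssup_{D \cap \mathbb{D}} q$ and $\essinf_{D \cap \mathbb{D}} p$, so both membership in $A^{q(\cdot)}_\alpha$ and non-membership in $A^{p(\cdot)}_\alpha$ follow from strictly subcritical and supercritical power integrals, sidestepping the borderline logarithmic-divergence computation that your critical choice $\beta t = \alpha + 2$ obliges you to carry out (and which you do handle correctly).
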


\begin{proof}
  Assume without loss of generality that $D$ is centered at $1$. 
  Let $s$ be such that
  $p_{-,D} = \essinf_{z \in D} p(z) > s > \esssup_{z \in D} q(z) = q_{+,D}.$
  It follows from the lemma that 
  the function $f(z)= (1-z)^{-(2+\alpha)/s}$ is in $A^{q_{+,D}}$ but not in
  $A^{p_{-,D}}$. 
  Thus $f(z) \in A^{q(\cdot)}$ but
  $f \not \in A^{p(\cdot)}$. 
\end{proof}

\begin{corollary}
If $p$ and $q$ are two exponents continuous in $\overline{\mathbb{D}}$ with 
$0 < p_{-}, q_{-}$ and $p_{+}, q_{+} < \infty$ then if 
$p(e^{i\theta}) \neq q(e^{i\theta})$ for some $\theta$, then 
$A^p_\alpha \neq A^q_\alpha$.
\end{corollary}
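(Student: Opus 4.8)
The plan is to reduce the corollary to the preceding theorem by using continuity to produce a boundary disc on which one exponent strictly dominates the other. First I would fix a boundary point $e^{i\theta_0}$ at which $p(e^{i\theta_0}) \neq q(e^{i\theta_0})$, and, swapping the roles of $p$ and $q$ if necessary, assume without loss of generality that $p(e^{i\theta_0}) > q(e^{i\theta_0})$. Set $\epsilon = \tfrac{1}{3}\bigl(p(e^{i\theta_0}) - q(e^{i\theta_0})\bigr) > 0$.

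Since $p$ and $q$ are continuous on $\overline{\mathbb{D}}$, I would then choose a disc $D$ centered at $e^{i\theta_0}$ small enough that $|p(z) - p(e^{i\theta_0})| < \epsilon$ and $|q(z) - q(e^{i\theta_0})| < \epsilon$ for every $z \in D \cap \overline{\mathbb{D}}$. On the region $D \cap \mathbb{D}$ this gives
\[
\essinf_{z \in D \cap \mathbb{D}} p(z) \geq p(e^{i\theta_0}) - \epsilon
> q(e^{i\theta_0}) + \epsilon \geq \esssup_{z \in D \cap \mathbb{D}} q(z),
\]
where the strict middle inequality holds because $p(e^{i\theta_0}) - \epsilon = \tfrac{1}{3}\bigl(2p(e^{i\theta_0}) + q(e^{i\theta_0})\bigr)$ exceeds $q(e^{i\theta_0}) + \epsilon = \tfrac{1}{3}\bigl(p(e^{i\theta_0}) + 2q(e^{i\theta_0})\bigr)$. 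Thus the hypotheses of the preceding theorem are satisfied with this disc $D$, and the theorem immediately yields $A^p_\alpha \neq A^q_\alpha$.

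I expect no real obstacle here: the content is entirely a matter of verifying that continuity on the closed disc upgrades the strict pointwise gap at $e^{i\theta_0}$ to a uniform gap between $\essinf p$ and $\esssup q$ over the lens-shaped set $D \cap \mathbb{D}$, which the choice $\epsilon = \tfrac{1}{3}(p(e^{i\theta_0}) - q(e^{i\theta_0}))$ arranges. The one point that deserves a remark is that $D \cap \mathbb{D}$ has positive area, since $e^{i\theta_0}$ lies on the boundary of $\mathbb{D}$; this ensures the essential infimum and supremum over $D \cap \mathbb{D}$ are genuinely governed by the continuous values of $p$ and $q$ near $e^{i\theta_0}$, so that applying the theorem is legitimate.
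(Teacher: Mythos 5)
Your proof is correct and is exactly the argument the paper intends: the corollary is stated without proof precisely because continuity at $e^{i\theta_0}$ immediately produces a boundary-centered disc $D$ with $\essinf_{D\cap\mathbb{D}} p > \esssup_{D\cap\mathbb{D}} q$, at which point the preceding theorem applies. Your explicit choice $\epsilon = \tfrac{1}{3}\bigl(p(e^{i\theta_0})-q(e^{i\theta_0})\bigr)$ just makes the routine verification concrete; there is nothing to correct.
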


\begin{theorem}
Suppose that $p$ and $q$ are two exponents in 
$\overline{\mathbb{D}}$ with $0 < p_- \leq p_+ < \infty$ and 
$0 < q_- \leq q_+ < \infty$ 
and assume that $p$ and $q$ are log-H\"{o}lder
continuous. 
Then $A^p = A^q$ if and only if 
$p(e^{i\theta}) = q(e^{i\theta})$ for all $\theta$. 
\end{theorem}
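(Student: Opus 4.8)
The plan is to dispatch the two implications separately, each being a short consequence of a result already established in this section.

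\emph{Sufficiency} ($p = q$ on $\partial\mathbb{D}$ $\Rightarrow$ $A^p = A^q$). First I would note that log-H\"{o}lder continuity on $\overline{\mathbb{D}}$ implies uniform radial log-H\"{o}lder continuity: fixing $\theta$ and taking $z = re^{i\theta}$, $w = e^{i\theta}$ in the log-H\"{o}lder estimate gives $|z-w| = 1-r$, so $|p(re^{i\theta}) - p(e^{i\theta})| \leq C/(-\log(1-r))$ for all $0 < r < 1$, which is exactly the bound required in the first definition (for any fixed $R \in (0,1)$). Thus Theorem \ref{thm:logholder_equiv} applies to both $p$ and $q$, yielding $A^p = A^{\widehat{p}}$ and $A^q = A^{\widehat{q}}$, where $\widehat{p}(re^{i\theta}) = p(e^{i\theta})$ and $\widehat{q}(re^{i\theta}) = q(e^{i\theta})$. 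The hypothesis $p = q$ on the circle says exactly that $\widehat{p} = \widehat{q}$ identically, so $A^{\widehat{p}} = A^{\widehat{q}}$ and hence $A^p = A^q$.

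\emph{Necessity} ($A^p = A^q$ $\Rightarrow$ $p = q$ on $\partial\mathbb{D}$). I would prove the contrapositive, which is essentially the preceding corollary: since log-H\"{o}lder continuous exponents are continuous on $\overline{\mathbb{D}}$, if $p(e^{i\theta_0}) \neq q(e^{i\theta_0})$ for some $\theta_0$ then $A^p \neq A^q$. For completeness I would recall the mechanism behind that corollary. Assuming without loss of generality $p(e^{i\theta_0}) > q(e^{i\theta_0})$ and setting $\varepsilon = \tfrac{1}{3}(p(e^{i\theta_0}) - q(e^{i\theta_0})) > 0$, continuity produces a disc $D$ centered at $e^{i\theta_0}$ on which $p(z) > p(e^{i\theta_0}) - \varepsilon$ and $q(z) < q(e^{i\theta_0}) + \varepsilon$; a one-line computation gives $p(e^{i\theta_0}) - \varepsilon > q(e^{i\theta_0}) + \varepsilon$, so that $\essinf_{z \in D \cap \mathbb{D}} p(z) > \esssup_{z \in D \cap \mathbb{D}} q(z)$, and the preceding theorem forces $A^p \neq A^q$.

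I do not anticipate a genuine obstacle, as both halves are repackagings of Theorem \ref{thm:logholder_equiv} and of the disc theorem (through its corollary); the substantive work has already been carried out there. The only points requiring attention are the verification that log-H\"{o}lder continuity entails the uniform radial version needed for Theorem \ref{thm:logholder_equiv}, and the routine continuity argument localizing the strict inequality between $p$ and $q$ into a boundary disc $D$ on which the hypotheses of the preceding theorem hold.
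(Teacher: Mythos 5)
Your proposal is correct and is exactly the argument the paper intends: the theorem is stated there without proof as an immediate consequence of Theorem \ref{thm:logholder_equiv} (sufficiency, via the observation that log-H\"{o}lder continuity on $\overline{\mathbb{D}}$ restricts radially to uniform radial log-H\"{o}lder continuity) together with the preceding corollary on continuous exponents differing at a boundary point (necessity). Your added details --- the radial specialization $|z-w|=1-r$ and the continuity argument producing a boundary disc $D$ with $\essinf_{D\cap\mathbb{D}} p > \esssup_{D\cap\mathbb{D}} q$ --- are both sound, so nothing is missing.
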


\section{Complex Exponents and Hardy Spaces}
In this section we find a condition under which we can use
complex variable exponentials to easily derive many results for
certain variable exponent Hardy spaces.  The following
proposition lets us simply our definitions somewhat.

\begin{proposition}
If a continuous harmonic function has log-H\"{o}lder continuous boundary
values on the unit circle, then it is log-H\"{o}lder continuous in the
closed unit disc.  
\end{proposition}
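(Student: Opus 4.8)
The plan is to represent $u$, the given continuous harmonic function, as the Poisson integral of its boundary values $g(\theta) = u(e^{i\theta})$, and to show that the log-H\"{o}lder modulus $\omega(t) = 1/\log(1/t)$ of $g$ is, up to a multiplicative constant, inherited by $u$ throughout $\overline{\mathbb{D}}$. Since $u$ is continuous on the compact set $\overline{\mathbb{D}}$ it is bounded, say $|u| \le M$, and on the central disc $\{|z| \le 1/2\}$ it is real-analytic and hence Lipschitz, so the only real issue is uniform control near the boundary. For pairs with $|z-w|$ bounded below by a fixed $\delta_0$ the desired inequality is automatic from boundedness (since then $1/\log(1/|z-w|)$ is bounded below), so I only need to treat pairs with $|z-w| = \delta$ small.

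First I would record the engine of the whole argument: the elementary estimate $\int_\delta^{1} \omega(s)\, s^{-2}\,ds \le C\,\omega(\delta)/\delta$, valid for the log-H\"{o}lder modulus because $\int_\delta \frac{ds}{s^2\log(1/s)} \sim \frac{1}{\delta\log(1/\delta)}$. Using $\frac{1}{2\pi}\int P_r\,dt = 1$ to subtract the constant $g(\theta)$, I would then prove two key near-boundary bounds for $z = re^{i\theta}$ with $r \ge 1/2$. The radial bound is
\[
  |u(re^{i\theta}) - g(\theta)| \le \frac{1}{2\pi}\int_{-\pi}^{\pi} P_r(\theta - t)\,\omega(|\theta - t|)\,dt \le C\,\omega(1-r),
\]
and, after differentiating the kernel and using $\int \nabla_z P\,dt = 0$ together with $|\nabla_z P(z,e^{it})| \le C|z - e^{it}|^{-2}$ and $|z - e^{it}|^2 \ge (1-r)^2 + c\,(\theta - t)^2$, the gradient bound is
\[
  |\nabla u(re^{i\theta})| \le C\int_{-\pi}^{\pi}\frac{\omega(|\theta - t|)}{(1-r)^2 + (\theta - t)^2}\,dt \le \frac{C\,\omega(1-r)}{1-r}.
\]
In each case the integral is split at $|\theta - t| = 1-r$; the inner part is controlled by monotonicity of $\omega$ and $\frac{1}{2\pi}\int P_r \le 1$, and the outer part by the integral estimate above.

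With these bounds in hand, I would finish by a short path argument. Given $z_j = r_j e^{i\theta_j}$ with $|z_1 - z_2| = \delta$ small, note $|r_1 - r_2| \le \delta$. If both $r_j \le 1 - \delta$, the segment $[z_1,z_2]$ stays in $\{|z| \le 1-\delta\}$ by convexity of the disc, where the gradient bound gives $|\nabla u| \le C/(\delta\log(1/\delta))$ (using that $1/(s\log(1/s))$ decreases in $s$ for small $s$, and the Lipschitz bound on $\{|z|\le 1/2\}$); integrating over a path of length $\lesssim \delta$ yields $|u(z_1)-u(z_2)| \le C/\log(1/\delta)$. Otherwise some $r_j > 1 - \delta$, whence both $r_j > 1 - 2\delta$; I would radially project to $w_j = (1-2\delta)e^{i\theta_j}$, bound $|u(z_j) - u(w_j)| \le C\,\omega(\delta)$ by the radial estimate (and the doubling $\omega(2\delta) \le C\,\omega(\delta)$), and then connect $w_1, w_2$ inside $\{|z|\le 1-2\delta\}$ exactly as before, again getting $C/\log(1/\delta)$. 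Combining the pieces gives log-H\"{o}lder continuity of $u$.

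The main obstacle will be the bookkeeping in this final combination: verifying that the chosen paths stay at distance comparable to $\delta$ from the boundary so the gradient bound is usable, and checking the monotonicity and doubling properties of $\omega$ that keep all constants uniform. The analytic heart---the integral estimate and the two Poisson-kernel bounds---is routine once stated, so the real care lies in organizing the cases near the boundary.
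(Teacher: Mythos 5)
Your argument is correct and essentially complete, but it follows a genuinely different route from the paper's. The paper never differentiates the Poisson kernel: it exploits the rotational symmetry of the disc to handle angular increments at a fixed radius directly, writing $f(re^{i\theta})-f(r)$ as the Poisson integral of the boundary increment $f(e^{i(\theta-\psi)})-f(e^{-i\psi})$ and using only $\frac{1}{2\pi}\int P_r\,d\psi=1$ to transfer the boundary modulus verbatim into the disc; the only hard estimate it then needs is the radial boundary-to-interior bound $|f(1)-f(r)|\le C/\log\frac{1}{1-r}$, which it proves by the kernel bound $P_r(t)\le C(1-r)/((1-r)^2+t^2)$, the substitution $u=t/(1-r)$, and a dyadic decomposition summing $(j+1)e^{-j}$. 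You instead run the classical modulus-of-continuity scheme: the approximation bound $|u(re^{i\theta})-g(\theta)|\le C\,\omega(1-r)$, the interior gradient bound $|\nabla u(z)|\le C\,\omega(1-|z|)/(1-|z|)$, and a path/case analysis near the boundary. The two proofs share the same analytic engine --- your integral estimate $\int_\delta \omega(s)s^{-2}\,ds\lesssim \omega(\delta)/\delta$ is exactly what the paper's dyadic sum computes --- but your version buys generality: it works unchanged for any modulus satisfying that Dini-type condition and adapts to other domains, and it makes explicit the interior two-point comparisons (via the gradient bound and path integration) that the paper's reduction to its two special increments leaves implicit. The paper's rotation trick, by contrast, is disc-specific but disposes of the entire angular case with no kernel differentiation at all.

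One small repair is needed in your write-up: as literally stated, the engine integral $\int_\delta^1 \omega(s)s^{-2}\,ds$ with $\omega(s)=1/\log(1/s)$ diverges at the upper endpoint, since $\log(1/s)\to 0$ as $s\to 1^-$. Either truncate the outer integrals at $|\theta-t|=1/2$ (boundedness of $u$ handles the rest) or, better, use the normalized modulus $1/\log(a/t)$ with $a>2$, which the paper explicitly observes defines the same log-H\"{o}lder class; this is exactly why the paper's own computation carries the factor $\log|\pi e^2/t|$. With that normalization, $\omega$ is increasing and doubling on $(0,2]$ and $\omega(s)/s$ is decreasing, so all the monotonicity and doubling checks in your final bookkeeping go through with uniform constants.
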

\begin{proof}
  Let $f$ be the harmonic function.  It is enough to show that
  $|f(re^{i\theta}) - f(r)| \leq C/\log|2\pi/\theta|$ for any $r$ and
  that 
  $|f(1)-f(r)| \leq -C/\log|1-r|$.  The first inequality
  follows easily from the
  fact that
  \[
    f(re^{i\theta})-f(r) =
    \frac{1}{2\pi} \int_{0}^{2\pi} P_r(e^{i\psi})
      [f(e^{i(\theta-\psi)})-f(e^{-i\psi})]
    \, d \psi
  \]
  so that
  \[
    |f(re^{i\theta})-f(r)|
    \leq
    \frac{1}{2\pi} \int_{-\pi}^{\pi} P_r(e^{i\psi}) \frac{C}{\log(2\pi/|\theta|)} 
    \, d \psi \leq
     \frac{C}{\log(2\pi/|\theta|)}.
  \]
  for $|\theta| < 1$.

  To prove the second inequality, first note that it is enough to show that
\[
  |f(1) - f(r)| \leq \frac{C}{\log \frac{1}{1-r}}
\]
for $r$ close enough to $1$.  But
\[
  f(1) - f(r) =
   \frac{1}{2\pi}
  \int_{-\pi}^{\pi} \frac{1}{2\pi} P_r(e^{it})(f(1) - f(e^{it}))
     \, dt
\]
Note that 
\[
  P_r(t) \leq \frac{C(1-r)}{(1-r)^2 + t^2}
\]
for some constant $C$ (see \cite[p.\ 74]{D_Hp}).  
Thus we want to show that
\[
  \int_{-\pi}^{\pi} \frac{1-r}{(1-r)^2 + t^2} \frac{|\log(1-r)|}{\log|\pi e^2/t|}
  \, dt
  \leq C
\]
Now let $u = t/(1-r)$ to rewrite this integral as
\[
  \int_{-\pi/(1-r)}^{\pi/(1-r)} \frac{du}{1+u^2}
  \frac{|\log(1-r)|}{|\log((1-r)|u|/(\pi e^2))|} \, du
\]
Note that the integrand is even and that
\[
   \int_{0}^{1} \frac{du}{1+u^2}
  \frac{|\log(1-r)|}{|\log((1-r)|u|/(\pi e^2)|} \, du
\]
is bounded independently of $r$ for $1-r$ small.
Let $a=1-r$ and consider the integral
\begin{equation}\label{eq:logestint}
  \int_1^{\pi/a} \frac{1}{u^2} \frac{|\log a|}{|\log(au/(\pi e^2))|} \, du.
\end{equation}
We may assume without loss of generality that $a < \pi/e$.  
Let $n$ be the first integer such that $\pi/a \leq e^n$,
so that in particular $n \geq 1$. Let $j+1 \leq n$. 
Notice that 
\[
  \int_{e^j}^{e^{j+1}} \frac{1}{u^2} \frac{|\log a|}{|\log(au/(\pi e^2))|} \, du
  \leq e^{-j} \frac{|\log a|}{-\log a + 1 + \log \pi - j}.
\]
Then we have that
the above expression is at most
\[
  e^{-j} \frac{n}{n  - j} %
  \leq (j+1)e^{-j}.
\]
But since $\displaystyle \sum_{j=0}^\infty (j+1)e^{-j}$ converges, the
integral in \eqref{eq:logestint} is bounded by a number independent of $a$,
which finishes the proof. 
\end{proof}

\begin{definition}\label{def:logholderbc}
Suppose that $p(\cdot)$ is log-H\"{o}lder continuous.
By its harmonic conjugate we mean the harmonic conjugate of the 
harmonic function with the same boundary values as $p(\cdot)$.
We say that an exponent $p$ is log-H\"{o}lder continuous with bounded 
conjugate if $p$ is log-H\"{o}lder continuous
in the closed unit disc and its harmonic conjugate is bounded.
\end{definition}
Note that if $p$ is log-H\"{o}lder continuous in the closed unit
disc then so is the harmonic function with the same boundary values
as $p$, by the above Proposition. 
The main advantage of working with exponents satisfying this definition
is that for such exponents, many results
about classical Hardy spaces immediately imply corresponding results for
variable Hardy spaces.

Definition \ref{def:logholderbc} is only slightly more restrictive than
log-H\"{o}lder continuity.  For example, if $p$ is H\"{o}lder
continuous then $p$ is log-H\"{o}lder continuous with bounded conjugate.
This also happens if $p$ satisfies a condition like log-H\"{o}lder
continuity but with an exponent greater than $1$ on the logarithm. 

We now discuss some basic facts about variable exponent Hardy spaces.
For more information see
\cite{Kokilashvili-Paatashvili2006, Kokilashvili-Paatashvili2008, Kokilashvili-Paatashvili2015, DCU-VariableHardy}. 
If $p(\cdot)$ is log-H\"{o}lder continuous and $p_- \geq 1$, then
$\|f(e^{i\theta})\|_{L^p(e^{i\theta})} = \|f\|_{H,p(\cdot)}$.
Furthermore, we have mean convergence to boundary values. 
Both of these facts follow from \cite[Theorem 5.11]{DCU_VarLebesgueBook}. 

If $0 < p_- < 1$, we may write $f = B g$ where $g$ is the
Blaschke product of the zeros of $f$.  Then $g^{p_{-}} \in H^{p/p_{-}}$,
so
\[
  M_{p(\cdot)}(r,g)^{p_{-}} = M_{p(\cdot)/p_{-}}(r,g^{p_{-}})
  \rightarrow
  \|g(e^{i \cdot})^{p_{-}}\|_{L^{p(\cdot)/p_{-}}} = 
  \|g(e^{i\cdot})\|^{p_{-}}_{L^{p(e^{i\cdot})}}
\]
as $r \rightarrow 1$.
  But also $g(re^{i\theta}) \rightarrow g(e^{i\theta})$ pointwise almost
  everywhere, so by an application of Egorov's theorem, we have that
  $g$ converges to its boundary values in the mean
  (see \cite[Lemma 1 after Theorem 2.6]{D_Hp}).  The fact that
  $f$ converges to its boundary values in the mean follows from
  Fatou's lemma and 
  a similar
  application of Egorov's theorem. 
   (see \cite[Proof of Theorem 2.6]{D_Hp}). 
  Furthermore,
  $\|f\|_{H, p(\cdot)} = \|g\|_{H,p(\cdot)}$.

  We will not need the following Theorem, but we include it because it
  is simpler to prove than Theorem \ref{thm:boundedargdecomp_psmall}.
  It may generally be used in place of that theorem when $p_{-} > 1$.
  For the remainder of this section,
  we will let $\widetilde{p}$ be the harmonic conjugate of the
  extension of
  $p$ as a harmonic function in the unit disc, and let
  $\widehat{p} = p + i \widetilde{p}$.

\begin{theorem}\label{thm:boundedargdecomp_pbig}
  Suppose that $f \in H^p$ and that $p$ is log-H\"{o}lder continuous
  with bounded conjugate.
  Assume that $p$ is harmonic in $\mathbb{D}$. 
  Also suppose that
  $1 < p_{-} \leq p_{+} < \infty$. Then we may write
  \[
    f = f_1 - f_2
  \]
  where each $f_j$ satisfies either the inequality
  $-\tfrac{\pi}{2} < \arg f_j(z) < \tfrac{\pi}{2}$ for all
  $z \in \mathbb{D}$, or satisfies
  $f_j(z) = 0$ for all $z \in \mathbb{D}$.
  Furthermore we have
  $\|f_j\|_{H,p(\cdot)} \leq C \|f\|_{H,p(\cdot)}$ for some constant
  $C$ depending only on $p_{-}$, $p_{+}$,
  and the log-H\"{o}lder constant of
  $p$.
  Also, 
  \begin{equation}\label{eq:complexpbd_pbig}
    \exp(-\pi  \|\widetilde{p}\|_\infty/2) |f_j|^{p(z)}
    \leq |f_j(z)^{\widehat{p(z})/2}|^2
    \leq \exp(\pi  \|\widetilde{p}\|_\infty/2) |f_j|^{p(z)}
  \end{equation}
If $\|f\|_{H,p(\cdot)} \leq 1/C$ then 
\[
  \|f_j^{\widehat{p}(\cdot)/2}\|_{H,2} \leq 
        \exp(\pi \|\widetilde{p}\|_\infty/4)
\]
where $C$ is the same constant as above.
If
$\|f_j\|_{H,p(\cdot)} \leq C'$
for some constant $C'$ 
  then
  \[
    \|f_j^{\widehat{p}(\cdot)/2}\|_{H,2} \leq
     \max(C'^{p_-/2},C'^{p_+/2}) \exp(\pi \|\widetilde{p}\|_\infty/2).
  \]
  A converse also holds:
  if $\|f_j^{\widehat{p}(\cdot)/2}\|_{H,2} \leq C'$ then
  \[
    \|f_j\|_{H,p(\cdot)} \leq
    \max(C'^{2/p_-},C'^{2/p_+}) \exp(\pi \|\widetilde{p}\|_\infty/2).
  \]
\end{theorem}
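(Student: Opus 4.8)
The plan is to construct the decomposition from the real part of $f$ and then to read off all the quantitative statements from an explicit formula for $|f_j^{\widehat p/2}|$ together with the boundedness of the conjugate function on $L^{p(\cdot)}$. First I would produce $f_1$ and $f_2$. Since $p_- > 1$ we have $f \in H^1$, so $f$ is the Poisson integral of its boundary function and $u := \Rp f$ is the Poisson integral of $u^\ast := \Rp f(e^{i\theta})$, which lies in $L^{p(e^{i\theta})}$. I would split $u^\ast = (u^\ast)^+ - (u^\ast)^-$ and set $u_1 = P[(u^\ast)^+]$, $u_2 = P[(u^\ast)^-]$, the Poisson integrals of the two nonnegative functions; these are nonnegative harmonic functions with $u_1 - u_2 = u$. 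Letting $f_j$ be the analytic function with $\Rp f_j = u_j$ and normalizing its imaginary part, the functions $f_1 - f_2$ and $f$ have the same real part and hence differ by an imaginary constant, which the normalization kills, giving $f = f_1 - f_2$. Since the Poisson integral of a nonnegative function that is not a.e.\ zero is strictly positive throughout $\mathbb{D}$, either $(u^\ast)^+ \equiv 0$, in which case I take $f_1 \equiv 0$, or $\Rp f_1 > 0$ in $\mathbb{D}$ so that $-\tfrac{\pi}{2} < \arg f_1 < \tfrac{\pi}{2}$; the same dichotomy applies to $f_2$.

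Next I would prove the norm bound. The boundary values are $f_j(e^{i\theta}) = (u^\ast)^\pm + i\,\widetilde{(u^\ast)^\pm}$, where $\widetilde{\phantom{x}\cdot\phantom{x}}$ denotes the conjugate function. Since $0 \le (u^\ast)^\pm \le |u^\ast| \le |f(e^{i\theta})|$ pointwise, monotonicity of the Luxemborg--Nakano norm gives $\|(u^\ast)^\pm\|_{L^{p(\cdot)}} \le \|f(e^{i\theta})\|_{L^{p(\cdot)}} = \|f\|_{H,p(\cdot)}$. Because $p$ is log-H\"{o}lder continuous and $1 < p_- \le p_+ < \infty$, the conjugate function is bounded on $L^{p(\cdot)}$ (see \cite{DCU_VarLebesgueBook}), so $\|\widetilde{(u^\ast)^\pm}\|_{L^{p(\cdot)}} \le C_0 \|(u^\ast)^\pm\|_{L^{p(\cdot)}}$, and the triangle inequality yields $\|f_j\|_{H,p(\cdot)} \le (1 + C_0)\|f\|_{H,p(\cdot)}$. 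This boundedness is the main analytic input of the theorem; with $C = 1 + C_0$ it gives the asserted bound and in particular shows $f_j \in H^{p(\cdot)}$.

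For the pointwise inequality \eqref{eq:complexpbd_pbig}, I would use that $p$ is harmonic, so $\widehat p = p + i\widetilde p$ is analytic, and that $\Rp f_j > 0$ makes $f_j^{\widehat p/2} = \exp(\tfrac12 \widehat p \log f_j)$ a well-defined analytic function for the principal branch. Computing the real part of $\tfrac12 \widehat p \log f_j = \tfrac12 (p + i\widetilde p)(\log|f_j| + i\arg f_j)$ gives the identity
\[
  |f_j^{\widehat p(z)/2}|^2 = |f_j(z)|^{p(z)} \exp\!\big(-\widetilde p(z)\arg f_j(z)\big).
\]
Since $|\arg f_j| < \tfrac{\pi}{2}$ and $|\widetilde p| \le \|\widetilde p\|_\infty$, the exponential factor lies between $\exp(-\pi\|\widetilde p\|_\infty/2)$ and $\exp(\pi\|\widetilde p\|_\infty/2)$, which is exactly \eqref{eq:complexpbd_pbig}.

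Finally I would integrate \eqref{eq:complexpbd_pbig} over the circle of radius $r$. Integrating the upper bound gives $\tfrac{1}{2\pi}\int_0^{2\pi}|f_j^{\widehat p/2}(re^{i\theta})|^2\,d\theta \le \exp(\pi\|\widetilde p\|_\infty/2)\,M_{p(\cdot)}^{p(\cdot)}(r,f_j)$; since the left side increases in $r$, letting $r\to 1$ and using mean convergence to the boundary values (valid as $p_- \ge 1$) bounds $\|f_j^{\widehat p/2}\|_{H,2}^2$ by $\exp(\pi\|\widetilde p\|_\infty/2)$ times the boundary modular $\tfrac{1}{2\pi}\int_0^{2\pi}|f_j(e^{i\theta})|^{p(e^{i\theta})}\,d\theta$. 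The standard norm--modular inequalities then convert the norm hypotheses into modular bounds: $\|f_j\|_{H,p(\cdot)} \le C\|f\|_{H,p(\cdot)} \le 1$ (the case $\|f\|_{H,p(\cdot)}\le 1/C$) forces the boundary modular to be $\le 1$, giving $\|f_j^{\widehat p/2}\|_{H,2}\le \exp(\pi\|\widetilde p\|_\infty/4)$, while $\|f_j\|_{H,p(\cdot)}\le C'$ forces it to be $\le \max(C'^{p_-},C'^{p_+})$. For the converse, integrating the lower bound in \eqref{eq:complexpbd_pbig} yields $\tfrac{1}{2\pi}\int_0^{2\pi}|f_j(e^{i\theta})|^{p(e^{i\theta})}\,d\theta \le \exp(\pi\|\widetilde p\|_\infty/2)\,\|f_j^{\widehat p/2}\|_{H,2}^2$ by Fatou's lemma, and the norm--modular inequality in the opposite direction gives the stated bound on $\|f_j\|_{H,p(\cdot)}$. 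I expect the only real fussiness to be the bookkeeping between the two regimes of the norm--modular relations (norm above or below $1$) and checking that the slightly loose constants stated in the theorem, e.g.\ $\exp(\pi\|\widetilde p\|_\infty/2)$ in place of the sharper $\exp(\pi\|\widetilde p\|_\infty/4)$ arising after taking square roots, do dominate what the computation produces.
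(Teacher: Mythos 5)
Your proposal is correct and follows essentially the same route as the paper: the same decomposition $f = f_1 - f_2$ via the positive and negative parts of $\Rp f$ on the unit circle, the norm bound via boundedness of the harmonic conjugation operator on $L^{p(\cdot)}$ for log-H\"{o}lder $p$ with $p_- > 1$, the pointwise identity $|f_j^{\widehat{p}/2}|^2 = |f_j|^{p} e^{-\widetilde{p}\arg f_j}$, and the integration over circles combined with norm--modular bookkeeping as $r \to 1$. The paper's proof is simply a terser version of yours, with your extra care over the Poisson-integral construction, the dichotomy between $f_j \equiv 0$ and $\Rp f_j > 0$, and the two regimes of the norm--modular inequalities filling in details the paper leaves implicit.
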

\begin{proof}
  Let $f_1$ be the analytic function whose real part equals
  $\max(f,0)$ on the unit circle, and let
  $f_2$ be the analytic function whose real part equals
  $-\min(f,0)$ on the unit circle. 
  Then $f = f_1 -  f_2$.  Also
  $\Rp f_j \geq 0$ and 
  \[
    \|f_j\|_{H^{p(\cdot)}} \leq C \|f\|_{H^{p(\cdot)}} 
  \]
  since $p$ is log-H\"{o}lder continuous and thus the harmonic conjugation
  operator is bounded on $H^{p(\cdot)}$
  \cite[Theorem 5.39]{DCU_VarLebesgueBook}.  

  Note that 
  \[
    |f_j(z)^{\widehat{p}(z)/2}| =
    |f_j(z)|^{p(z)/2} e^{- \widetilde{p}(z) \arg f_j(z)/2}
  \]
  Thus equation \eqref{eq:complexpbd_pbig} holds. 
  If $\|f\|_{H,p(\cdot)} \leq 1/C$ then $\|f_j\|_{H,p(\cdot)} \leq 1$, so
  $\limsup_{r \rightarrow 1} M_{p(\cdot)}(r,f_j) \leq 1$ which implies that
  \[
    \limsup_{r \rightarrow 1}
        \frac{1}{2\pi} \int_0^{2\pi} |f_j(re^{i\theta})|^{p(re^{i\theta})} \,
    d\theta \leq 1.
  \]
  But this implies that
  \[
    \lim_{r \rightarrow 1} \frac{1}{2\pi} \int_0^{2\pi}
    |f_j(re^{i\theta})^{\widehat{p}(re^{i\theta})}|^2 d\theta \leq
      \exp(\pi \|\widetilde{p}\|_\infty/2).
    \]
    Note that the limit above exists because the $M_2$ integral means are
    increasing.  The rest of the proof follows from the
    definition of the Luxemborg-Nakano norm and
    equation \eqref{eq:complexpbd_pbig}. 
\end{proof}

The following theorem is similar to the previous one, but also applies in
the case that $p_{-} < 1$. 
\begin{theorem}\label{thm:boundedargdecomp_psmall}
  Suppose that $f \in H^p$ and that $p$ is log-H\"{o}lder continuous
  with bounded conjugate.  Also assume that $p$ is harmonic.
  Also suppose that
  $0 < p_{-} \leq p_{+} < \infty$. Let $n$ be the least integer
  such that $np_{-} \geq 2$. Then we may write
  \[
    f = B \sum_{j=1}^{n+1} f_j
  \]
  where $B$ is the Blaschke product of the zeros of $f$ and
  each $f_j$ satisfies either the inequality
  $-\tfrac{\pi n}{2} < \arg f_j(z) < \tfrac{\pi n}{2}$ for all
  $z \in \mathbb{D}$, or satisfies
  $f_j(z) = 0$ for all $z \in \mathbb{D}$.
  Furthermore we have
  $\|f_j\|_{H,p(\cdot)} \leq C \|f\|_{H,p(\cdot)}$ for some constant
  $C$ depending only on $n$, $p_{+}$, and on the log-H\"{o}lder constant of
  $p$.
  Also,
  \begin{equation}\label{eq:complexpbd}
    \exp(-\pi n \|\widetilde{p}\|_\infty/2) |f_j|^{p(z)}
    \leq |f_j(z)^{\widehat{p(z})/2}|^2
    \leq \exp(\pi n \|\widetilde{p}\|_\infty/2) |f_j|^{p(z)}. 
  \end{equation}
If $\|f\|_{H,p(\cdot)} \leq 1/C$ then
\[
  \|f^{\widehat{p}(\cdot)/2}\|_{H,2} \leq 
        \exp(\pi n \|\widetilde{p}\|_\infty/4)
\]
where $C$ is the same constant as above.
If
$\|f_j\|_{H,p(\cdot)} \leq C'$
for some constant $C'$ 
  then
  \[
    \|f_j^{\widehat{p}(\cdot)/2}\|_{H,2} \leq
     \max(C'^{p_-/2},C'^{p_+/2}) \exp(\pi n \|\widetilde{p}\|_\infty/2).
  \]
  A converse also holds:
  if $\|f_j^{\widehat{p}(\cdot)/2}\|_{H,2} \leq C'$ then
  \[
    \|f_j\|_{H,p(\cdot)} \leq
    \max(C'^{2/p_-},C'^{2/p_+}) \exp(\pi n \|\widetilde{p}\|_\infty/2).
  \]
\end{theorem}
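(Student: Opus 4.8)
The plan is to reduce this statement to Theorem \ref{thm:boundedargdecomp_pbig} by means of the Blaschke factorization followed by extraction of an $n$-th root and a binomial expansion. The obstruction to imitating the previous proof directly is that the harmonic conjugation operator need not be bounded on $H^{p(\cdot)}$ when $p_-<1$; the role of $n$ is precisely to pass to an exponent for which it is bounded. Since $np_-\geq 2>1$, the idea is to apply the previous theorem not to $f$ but to an $n$-th root of its zero-free part, whose natural exponent is $np(\cdot)$.

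First I would factor $f=Bg$, where $B$ is the Blaschke product of the zeros of $f$ and $g$ is zero-free, recording from the discussion preceding the theorem that $\|g\|_{H,p(\cdot)}=\|f\|_{H,p(\cdot)}$. Because $g$ does not vanish on the simply connected domain $\mathbb{D}$, the function $h=g^{1/n}$ is a well-defined analytic function, and from $|g|=|h|^{n}$ together with the homogeneity relation $\||F|^{s}\|_{p(\cdot)}=\|F\|_{sp(\cdot)}^{s}$ (with $F=h$, $s=n$) one gets $M_{p(\cdot)}(r,g)=M_{np(\cdot)}(r,h)^{n}$, so that $h\in H^{np(\cdot)}$ with $\|h\|_{H,np(\cdot)}=\|f\|_{H,p(\cdot)}^{1/n}$. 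The exponent $np(\cdot)$ is harmonic, log-H\"older continuous, and has bounded conjugate $n\widetilde{p}$ with $\|\widetilde{np}\|_\infty=n\|\widetilde{p}\|_\infty$, and $(np)_-\geq 2>1$, so Theorem \ref{thm:boundedargdecomp_pbig} applies to $h$. This yields $h=h_1-h_2$ with $\Rp h_j\geq 0$ and $\|h_j\|_{H,np(\cdot)}\leq C\|h\|_{H,np(\cdot)}$.

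Next I would expand $g=h^{n}=(h_1-h_2)^{n}=\sum_{k=0}^{n}(-1)^{n-k}\binom{n}{k}h_1^{k}h_2^{n-k}$ and set $f_{k+1}=\binom{n}{k}h_1^{k}h_2^{n-k}$ for $k=0,\dots,n$, so that $f=B\sum_{k=0}^{n}(-1)^{n-k}f_{k+1}$ realizes the decomposition into $n+1$ pieces, the signs being recorded in the sum. Since $\Rp h_j\geq 0$ and $h_j$ is analytic, the minimum principle for harmonic functions gives $\Rp h_j>0$ throughout $\mathbb{D}$ unless $h_j$ is constant, and after normalizing the imaginary constants the degenerate cases fall under the alternative $f_{k+1}\equiv 0$; thus $\arg h_j\in(-\pi/2,\pi/2)$ and hence $\arg f_{k+1}=k\arg h_1+(n-k)\arg h_2\in(-n\pi/2,n\pi/2)$, which is the required argument bound. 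The estimate $\|f_{k+1}\|_{H,p(\cdot)}\leq C\|f\|_{H,p(\cdot)}$ I would obtain from the generalized H\"older inequality for variable exponents applied with the pointwise conjugate exponents $np(\cdot)/k$ and $np(\cdot)/(n-k)$, giving $\||h_1|^{k}|h_2|^{n-k}\|_{p(\cdot)}\leq C\|h_1\|_{np(\cdot)}^{k}\|h_2\|_{np(\cdot)}^{n-k}$ at each radius and then passing to the limit $r\to 1$; combined with $\|h_j\|_{H,np(\cdot)}\leq C\|f\|_{H,p(\cdot)}^{1/n}$ this produces a constant depending only on $n$, $p_+$, and the log-H\"older constant of $p$.

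Finally, equation \eqref{eq:complexpbd} follows exactly as in the previous theorem from the pointwise identity $|f_j(z)^{\widehat{p}(z)/2}|=|f_j(z)|^{p(z)/2}e^{-\widetilde{p}(z)\arg f_j(z)/2}$ together with $|\arg f_j|<n\pi/2$ and $|\widetilde{p}|\leq\|\widetilde{p}\|_\infty$. The $H^2$ estimates then come by integrating \eqref{eq:complexpbd} over each circle, using $M_2(r,f_j^{\widehat{p}/2})^2=\tfrac{1}{2\pi}\int_0^{2\pi}|f_j^{\widehat{p}/2}|^2\,d\theta\leq\exp(\pi n\|\widetilde{p}\|_\infty/2)\,M_{p(\cdot)}^{p(\cdot)}(r,f_j)$, converting between the modular $M_{p(\cdot)}^{p(\cdot)}$ and the Luxemburg norm $M_{p(\cdot)}$ via the standard inequalities $M_{p(\cdot)}^{p(\cdot)}(r,f_j)\leq\max(M_{p(\cdot)}(r,f_j)^{p_-},M_{p(\cdot)}(r,f_j)^{p_+})$ and its reverse, and then letting $r\to 1$; the normalized case $\|f\|_{H,p(\cdot)}\leq 1/C$ I would read off by applying the estimate to the zero-free part $g=h^{n}$, for which $g^{\widehat{p}/2}=h^{\widehat{np}/2}$ reduces the claim to the corresponding bound of Theorem \ref{thm:boundedargdecomp_pbig} for $h$. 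I expect the main obstacles to be the sign and argument bookkeeping in the binomial expansion, needed to keep each $\arg f_j$ within $(-n\pi/2,n\pi/2)$, and the uniform control of the generalized-H\"older constant together with the justification of the passage to boundary values, all of which rest on the integral-means facts recorded before the theorem.
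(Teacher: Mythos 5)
Your proposal is correct and follows essentially the same route as the paper: factor $f = Bg$, pass to the $n$-th root $h = g^{1/n} \in H^{np(\cdot)}$, decompose $h = h_1 - h_2$ using boundedness of harmonic conjugation on $H^{np(\cdot)}$ (which is exactly what Theorem \ref{thm:boundedargdecomp_pbig} packages), expand $(h_1-h_2)^n$ binomially into $n+1$ terms, bound each term's $H^{p(\cdot)}$ norm by the variable-exponent H\"{o}lder inequality with exponents $np(\cdot)/k$ and $np(\cdot)/(n-k)$, and obtain \eqref{eq:complexpbd} from the pointwise identity $|f_j^{\widehat{p}/2}| = |f_j|^{p/2}e^{-\widetilde{p}\arg f_j/2}$. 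Two small points in your favor: your norm identity $\|h\|_{H,np(\cdot)} = \|f\|_{H,p(\cdot)}^{1/n}$ corrects what appears to be a typo in the paper's displayed $C\|f\|_{H,p(\cdot)}^{n}$, and the sign-absorption bookkeeping you flag as the main obstacle is elided at exactly the same level of detail in the paper's own proof.
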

\begin{proof}
  Let $f = Bg$ where $B$ is the Blaschke product formed with the
  zeros of $f$.  Then
  $g^{1/n} \in H^{np(\cdot)}$.  Thus we may write $g^{1/n} = g_1 -  g_2$, where
  $\Rp g_j \geq 0$ and 
  \[
    \|g_j\|_{H^{np(\cdot)}} \leq C \|g^{1/n}\|_{H^{np(\cdot)}} = 
    C \|f\|_{H^{p(\cdot)}}^n
  \]
  since $p$ is log-H\"{o}lder continuous
  and thus the harmonic conjugation
  operator is bounded on $H^{p(\cdot)}$.
  We will assume that no $g_j$ is identically zero (the proof where one is 
  identically zero is nearly identical but easier). 
  Since $g_j(z) > 0$ for all $z \in \mathbb{D}$, 
  we may define $\arg g_j(z)$ so that
  $-\tfrac{\pi}{2} < \arg g_j(z) < \tfrac{\pi}{2}$.
  Thus $g(z) = (g_1 - g_2)^n$, and so $g(z)$ can be written as a
  sum of $n+1$ functions, say $f_1, \ldots, f_{n+1}$, such that
  each one satisfies $-\tfrac{-\pi n}{2} < \arg f_j < \tfrac{\pi n}{2}$.
  Also, each $f_j$ satisfies $\|f_j\|_{H^{p(\cdot)}} \leq C \|f\|_{H^{p(\cdot)}}$ 
  by H\"{o}lder's inequality for variable Lebesgue spaces
  \cite[Corollary 2.28]{DCU_VarLebesgueBook}. 
  Then
  \[
    |f_j(z)^{\widehat{p}(z)/2}| =
    |f_j(z)|^{p(z)/2} e^{- \widetilde{p}(z) \arg f_j(z)/2}
  \]
    Thus equation \eqref{eq:complexpbd} holds. 
    The rest of the proof follows as in
    Theorem \ref{thm:boundedargdecomp_pbig}. 
  \end{proof}

We now discuss several results that follow quickly from the
corresponding results for classical Hardy spaces.
The first deals with Carleson measures.  We say that a measure $\mu$
on the unit disc is a Carleson measure for $H^p$, where $0 < p < \infty$, if
$\|f\|_{L^p(\mu)} \leq C \|f\|_{H^p}$ for all $f \in H^p$.  It is well known that
Carleson showed that a necessary and sufficient condition for a measure to be a
Carleson measure for $H^p$ is that $\mu(S) \leq C' h$ for every Carleson square
\[S_{h,\theta_0} = \{z: 1-h \leq |z| < 1 \text{ and }
  \theta_0 - h/2 < \arg z < \theta_0 + h/2\} \]
(see \cite{MR0141789,MR0117349}.)

Duren in \cite{D_Carlesonmeasure} generalized this.  His method is based
on H\"{o}rmander's proof in \cite{MR0234002}
of the Carleson measure theorem.  Let $0 < p \leq q < \infty$ and let
$\mu$ and $S$ be as before.  Then a necessary and sufficient condition for
$\|f\|_{L^q(\mu)} \leq C \|f\|_{H^p}$ to hold
for every $H^p$ function $f$ is that
$\mu(S) \leq C' h^{q/p}$ for every Carleson square $S$. 
We generalize this result to variable exponent Hardy spaces.
\begin{theorem}
Suppose that
  that $0 < p_{-} \leq p(\cdot) \leq p_{+} < \infty$ and let $1 \leq a < \infty$ be a
  constant.
Let $p$ be log-H\"{o}lder continuous with bounded conjugate. 
Then a necessary and sufficient condition that there exist a constant
$C$ such that 
$\|f\|_{L^{ap(\cdot)}(\mu)} \leq C \|f\|_{H,p(\cdot)}$ for every
function $f$ in $H^{p(\cdot)}$ is that
  $\mu(S) < C' h^{a}$ for every Carleson square $S$.
  \end{theorem}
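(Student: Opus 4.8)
The plan is to reduce the variable-exponent statement to Duren's classical Carleson-measure theorem (quoted above) through the correspondence $f \mapsto f^{\widehat{p}/2}$ between $H^{p(\cdot)}$ and $H^2$ furnished by Theorem \ref{thm:boundedargdecomp_psmall}. The key numerical observation is the exponent matching: applying Duren's result with classical exponents $2$ and $2a$ produces the Carleson condition $\mu(S) \le C'h^{(2a)/2} = C'h^{a}$, and the hypothesis $a \ge 1$ is exactly what guarantees $2a \ge 2$, so that Duren's theorem (which requires the larger exponent on the left) applies. I would first note that, by Theorem \ref{thm:logholder_equiv}, replacing $p$ by the harmonic function with the same boundary values alters the $H^{p(\cdot)}$ norm and the $L^{ap(\cdot)}(\mu)$ norm only by bounded factors (the pointwise comparison of integrands used there is valid against any measure, since it rests solely on the growth bound for Hardy functions), so we may assume $p$ is harmonic and invoke Theorem \ref{thm:boundedargdecomp_psmall}.

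For sufficiency, assume $\mu(S)\le C'h^a$. Since both sides of the desired inequality are homogeneous of degree one in $f$, it suffices to bound $\|f\|_{L^{ap(\cdot)}(\mu)}$ uniformly under the normalization $\|f\|_{H,p(\cdot)} \le 1/C$, with $C$ the constant of Theorem \ref{thm:boundedargdecomp_psmall}. Write $f = B\sum_{j=1}^{n+1} f_j$ as in that theorem; then $|B|\le 1$, each $\|f_j\|_{H,p(\cdot)}\le 1$, and hence $\|f_j^{\widehat{p}/2}\|_{H,2} \le \exp(\pi n\|\widetilde{p}\|_\infty/2)$. Using $|f|^{ap(z)} \le (n+1)^{ap_+}\sum_j |f_j|^{ap(z)}$ together with the left-hand inequality in \eqref{eq:complexpbd} raised to the power $a$, namely $|f_j|^{ap(z)} \le \exp(a\pi n\|\widetilde{p}\|_\infty/2)\,|f_j^{\widehat{p}/2}|^{2a}$, I would bound the modular $\int_{\mathbb D}|f|^{ap(z)}\,d\mu$ by a constant multiple of $\sum_j \|f_j^{\widehat{p}/2}\|_{L^{2a}(\mu)}^{2a}$. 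Each $f_j^{\widehat{p}/2}$ lies in $H^2$, so Duren's theorem gives $\|f_j^{\widehat{p}/2}\|_{L^{2a}(\mu)} \le C_D\|f_j^{\widehat{p}/2}\|_{H,2}$, and the chain closes to a bound on the modular independent of $f$. Converting this modular bound back to a Luxemburg-norm bound via the standard inequality $\|g\|_{L^{ap(\cdot)}(\mu)} \le \max(\rho^{1/(ap_-)},\rho^{1/(ap_+)})$ finishes this direction.

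For necessity I would argue directly, without the decomposition, by testing on transplanted reproducing kernels. Fixing a Carleson square $S = S_{h,\theta_0}$, set $w = (1-h)e^{i\theta_0}$ and
\[
  f_w(z) = \left(\frac{1-|w|^2}{(1-\overline{w}z)^2}\right)^{1/p(w)},
\]
which is analytic (principal branch) and satisfies $f_w^{\,p(w)/2} = (1-|w|^2)^{1/2}/(1-\overline{w}z)$, the normalized $H^2$ kernel. A direct computation shows $|f_w(z)| \gtrsim h^{-1/p(w)}$ on $S$, and since log-H\"older continuity forces $p(z)/p(w)\to 1$ uniformly on $S$ as $h\to0$, this yields $|f_w(z)|^{ap(z)}\gtrsim h^{-a}$ there with a constant independent of $h$. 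The hypothesis then gives $\int_{\mathbb D}|f_w|^{ap(z)}\,d\mu \gtrsim h^{-a}\mu(S)$ on the one hand, and $\int_{\mathbb D}|f_w|^{ap(z)}\,d\mu \lesssim 1$ on the other, provided $\|f_w\|_{H,p(\cdot)}$ is bounded uniformly in $w$; dividing yields $\mu(S)\lesssim h^a$.

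The main obstacle is precisely this uniform bound $\|f_w\|_{H,p(\cdot)}\lesssim 1$, which is where log-H\"older continuity does the essential work. Writing $P_w(\theta) = (1-|w|^2)/|1-\overline{w} e^{i\theta}|^2$ for the Poisson kernel and using $|1-\overline{w} e^{i\theta}| = |e^{i\theta}-w|$, one must bound $\frac{1}{2\pi}\int_0^{2\pi} P_w(\theta)^{p(e^{i\theta})/p(w)}\,d\theta$. On the set where $P_w<1$ the integrand is at most $1$; on the set where $P_w\ge1$, which forces $|e^{i\theta}-w|\lesssim\sqrt{h}$ and hence $\log(1/|e^{i\theta}-w|)\gtrsim\log(1/h)$, I would write $P_w^{p(e^{i\theta})/p(w)} = P_w\cdot P_w^{(p(e^{i\theta})-p(w))/p(w)}$ and observe that $\log P_w \lesssim \log(1/h)$ while $|p(e^{i\theta})-p(w)|\lesssim 1/\log(1/h)$, so the correction factor is $e^{O(1)}$ and the integral is controlled by $\frac{1}{2\pi}\int_0^{2\pi} P_w\,d\theta = 1$. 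Thus the modular of the boundary values of $f_w$ is bounded independently of $w$, whence so is $\|f_w\|_{H,p(\cdot)}$, completing the argument.
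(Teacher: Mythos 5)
Your proof is correct, and while the sufficiency half coincides with the paper's, the necessity half takes a genuinely different route. For sufficiency you do exactly what the paper does: decompose $f = B\sum_j f_j$ via Theorem \ref{thm:boundedargdecomp_psmall}, use \eqref{eq:complexpbd} to pass between $|f_j|^{p(z)}$ and $|f_j^{\widehat{p}/2}|^2$, and apply Duren's classical theorem with exponents $2$ and $2a$ (your remark that $a\ge 1$ is precisely what makes $2a\ge 2$ so that Duren applies is correct, though the paper leaves it tacit); you are in fact somewhat more careful than the paper about the normalization, the modular-to-norm conversions, and the preliminary reduction to harmonic $p$ --- a step the paper needs too, since Theorem \ref{thm:boundedargdecomp_psmall} assumes $p$ harmonic, but never makes explicit; your justification (the pointwise comparison behind Theorem \ref{thm:logholder_equiv} uses only the growth bound $|f(z)|\le C(1-|z|)^{-1/p_-}$ and so transfers to the $L^{ap(\cdot)}(\mu)$ modular of Hardy functions) is sound. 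For necessity, the paper tests on the variable complex power $f(z)=(1-\overline{z_0}z)^{-2/\widehat{p}(z)}$, normalized by $(1-\rho^2)^{1/p(\theta_0)}$, which requires the bounded-argument observation, the bounded conjugate $\widetilde{p}$ to get $|f|\asymp|1-\overline{z_0}z|^{-2/p(z)}$, and a two-regime estimate in $|\theta-\theta_0|$ to bound the $H^{p(\cdot)}$ norm; you instead freeze the exponent at $w$, testing on $f_w=\bigl((1-|w|^2)/(1-\overline{w}z)^2\bigr)^{1/p(w)}$, a constant power of the kernel (well defined since $\Rp(1-\overline{w}z)>0$, and in $H^\infty$ for fixed $w$), and verify the uniform bound $\|f_w\|_{H,p(\cdot)}\lesssim 1$ by the clean Poisson-kernel split $\{P_w\ge 1\}$ versus $\{P_w<1\}$, where $\log P_w\lesssim\log(1/h)$ against $|p(e^{i\theta})-p(w)|\lesssim 1/\log(1/h)$ makes the correction factor $e^{O(1)}$. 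This buys two things: the branch and argument bookkeeping for $z\mapsto c(z)^{\widehat{p}(z)}$ disappears, and, more substantively, your necessity argument never invokes the bounded conjugate, so it establishes that direction under log-H\"{o}lder continuity alone. Both arguments use log-H\"{o}lder continuity in the same essential role, to absorb the factors $h^{\pm(p(z)-p(w))}$ over the Carleson square. Two trivial slips: your phrase ``the hypothesis then gives'' attaches to the lower bound $\int|f_w|^{ap(z)}\,d\mu\gtrsim h^{-a}\mu(S)$, which comes from the kernel estimate on $S$, not from the hypothesis (the hypothesis gives the upper bound); and for $h$ bounded away from $0$ one should note $\mu$ is finite (test $f\equiv 1$), which is why the paper restricts to $h$ near $0$ --- neither affects correctness.
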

\begin{proof}
  The proof of the necessity uses a testing function similar to the one used in
  \cite{MR0141789}
  but some difficulties
  arise because of the variable exponents.
  The calculation involved is similar to one in \cite{XuanThesis}. 
  Let $|z_0| = \rho$ and
  let $h = 1 - \rho$.
  It is enough to show the result holds when $h$ is near $0$. 
  Let $q(z) = a p(z)$.  We may assume that the support of $\mu$ is in $S$. 
  Consider $f(z) = (1-\overline{z_0} z)^{-2/\widehat{p}(z)}$.
  Note that $1-z_0 z$ has bounded argument and so $f(z)$ is well defined and
  \[
    |f(z)| \asymp \frac{1}{|1-\overline{z_0}z|^{2/p(z)}}.
  \]
  Thus
  \[
    \frac{1}{2\pi}\int_{0}^{2\pi} |f(z)|^{p(z)} d\theta \asymp
     \frac{1}{2\pi}\int_{0}^{2\pi} |1-\overline{z_0}z|^2 d\theta
     = (1-\rho^2)^{-1},  \]
   where $A \asymp B$ means the quantities $A$ and $B$ are equivalent
   up to a multiplicative constant. 

  Now let \[g(z) = (1-\rho^2)^{1/p(\theta_0)} f(z).\]
  For $\rho$ near to $1$, we have that
  $|1-\overline{z_0}z| \asymp d$, where $d$ is the distance from $z$  to
  $1/\overline{z_0}$.  It is also clear that
  $d \asymp |\theta-\theta_0| + h$, where $\theta = \arg(z)$ and
  $\theta_0 = \arg(z_0)$.  Thus
  $g(z) \asymp h^{1/p_0} d^{-2/p(z)}$, where
  $p_0 = p(e^{i\theta_0})$. 

  It follows that $|g(z)|^{p(z)} \leq C$ for
  $|\theta-\theta_0|  \geq C h^{p_{-}/2p_{0}}$.
  Recall that in a disc $D$, if $p(\cdot)$ is log-H\"{o}lder
continuous then $|D|^{(p_{+,D)}-(p_{-,D})} \leq C$ for some
constant $C$, where $|D|$ is the area of $D$, and
$p_{+,D}$ and $p_{-,D}$ are the maximum and
minimum, respectively, of $p(\cdot)$ restricted to $D$. 
  For $|\theta-\theta_0| < C h^{p_{-}/2p_{0}}$ we thus have that
    \[
    h^{p(z)/p_{0}} \leq C h^a
  \]
  by the log-H\"{o}lder continuity of $p$. 
  Thus $\|g\|_{H,p(\cdot)} \leq C$. 
  The Carleson measure condition applied to 
  $g$ implies that
  \[
    \int_{S} (1-\rho^2)^{ap(z)/p_{0}}
      |(1-\overline{z_0} z)|^{-2a} \, d\mu(z) \leq C
  \]
  But this implies that
  \[
    \mu(S) h^{-2a} \leq 
    \int_{S} 
    |(1-\overline{z_0} z)|^{-2a} \, d\mu(z) \leq
    C(1-\rho^2)^{-ap_{+,S}/p_{-,S}}.
  \]
  But by log-H\"{o}lder continuity, $(1-\rho^2)^{-ap_{+,S}/p_{-,S}}$ is bounded by
$Ch^{-a}$. 
Now use the fact that in $S$ we have that
$(1-\overline{z_0}z)^{-1} \geq C h^{-1}$ (see \cite[Proof of Theorem 9.3]{D_Hp})
to see that $\mu(S) \leq C h^a$. 

For the sufficiency
let the $f_j$ be defined as in Theorem \ref{thm:boundedargdecomp_psmall}
and assume without loss of generality that $\|f\|_{H,p(\cdot)} \leq 1$. 
  Then
  $\|f_j^{\widehat{p(z)}/2}\|_{H^2} \leq C$,  
  which implies that 
  $f_j^{\widehat{p(z)}/2} \in L^{2a}(\mu)$ and
  $\|f_j^{\widehat{p(z)}/2}\|_{L^{2a}(\mu)} \leq C$.
    And since 
  $|f_j(z)|^{p(z)/2} \leq C |f_j(z)^{\widehat{p}(z)/2}|$, this shows that 
  $\|f_j(z)\|_{L^{ap(\cdot)}} \leq C$, so that 
  $\|f(z)\|_{L^{ap(\cdot)}} \leq C$. 
\end{proof}

The following two corollaries follow immediately.
The sharp form of the classical case of the first can be found in
\cite{Dragan_isoperimetric}. 
\begin{corollary}
  Suppose that $p$ is an exponent with $0 < p_- \leq p_+ < \infty$.
  Also suppose that $p$ is log-H\"{o}lder continuous with bounded 
  conjugate.  
  If $f \in H^{p(\cdot)}$ then $f \in A^{2p(\cdot)}$ and 
  $\|f(z)\|_{A^{2p(\cdot)}} \leq C \|f(z)\|_{H^{p(\cdot)}}$.
\end{corollary}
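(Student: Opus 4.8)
The plan is to obtain this corollary as an immediate application of the preceding Carleson measure theorem, specialized to $a = 2$ and with $\mu$ taken to be normalized Lebesgue area measure $dA$ on $\mathbb{D}$. The key observation is that $A^{2p(\cdot)}$ is exactly $A^{2p(\cdot)}_0$, whose defining measure is $dA_0 = dA$, so that $A^{2p(\cdot)}$ is precisely the space of analytic functions in $L^{2p(\cdot)}(dA)$ and $\|f\|_{A^{2p(\cdot)}}$ coincides with the norm $\|f\|_{L^{ap(\cdot)}(\mu)}$ appearing in the theorem when $a = 2$ and $\mu = dA$. Since the hypotheses on $p$ in the corollary (namely $0 < p_- \leq p_+ < \infty$ together with log-H\"older continuity with bounded conjugate) are identical to those of the theorem, and since $a = 2 \geq 1$, the theorem applies directly once the Carleson square condition for $dA$ is verified.

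First I would check that $dA$ satisfies $dA(S) \leq C' h^2$ for every Carleson square $S = S_{h,\theta_0}$. This is a direct area computation in polar coordinates: the integral $\int_{1-h}^{1} \int_{\theta_0 - h/2}^{\theta_0 + h/2} r \, d\theta \, dr$ has angular factor $h$ and radial factor $\tfrac{1 - (1-h)^2}{2} \leq h$, so the unnormalized area is at most $h^2$, and normalizing so that $dA(\mathbb{D}) = 1$ only improves the constant. Thus $dA(S) \leq C' h^2 = C' h^a$ with $a = 2$, which is the required Carleson condition (the strict inequality in the theorem is obtained by adjusting the constant).

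Having verified the Carleson condition, the theorem supplies a constant $C$ with $\|f\|_{L^{2p(\cdot)}(dA)} \leq C \|f\|_{H,p(\cdot)}$ for every $f \in H^{p(\cdot)}$. In particular the right-hand side is finite, so $f \in L^{2p(\cdot)}(dA)$; as $f$ is analytic this is precisely the statement $f \in A^{2p(\cdot)}$, and the inequality is exactly the claimed bound $\|f\|_{A^{2p(\cdot)}} \leq C \|f\|_{H^{p(\cdot)}}$.

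There is essentially no hard step here: the entire content is the recognition that unweighted Bergman measure is a Carleson measure with exponent $a = 2$, which reduces to the elementary geometric fact that a Carleson square of side $h$ has area comparable to $h^2$. The only points requiring minor care are matching the normalization of area measure so that $dA(\mathbb{D}) = 1$ and confirming that the exponent $ap(\cdot)$ specializes to $2p(\cdot)$, both of which are routine bookkeeping rather than genuine obstacles.
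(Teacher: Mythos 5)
Your proposal is correct and is exactly the argument the paper intends: the paper states this corollary as an immediate consequence of the preceding Carleson measure theorem, which is precisely your specialization to $a=2$ and $\mu = dA$, with the only content being the elementary estimate $dA(S_{h,\theta_0}) \leq C' h^2$ for Carleson squares that you verify in polar coordinates. Your bookkeeping on the normalization of $dA$ and the identification of $\|f\|_{L^{2p(\cdot)}(dA)}$ with $\|f\|_{A^{2p(\cdot)}}$ for analytic $f$ is accurate, so nothing is missing.
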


The classical Fejer-Riesz inequality (see \cite[Theorem 3.13]{D_Hp})
says that if $f \in H^p$ then
\[
  \int_{-1}^1 |f(x)|^p \, dx \leq
  \frac{1}{2} \int_0^{2\pi} |f(e^{i\theta})|^p \, d\theta.
\]
We have the following analogy. 
\begin{corollary}%
  Let $f \in H^{p(\cdot)}$ where $p$ is log-H\"{o}lder continuous with bounded
  conjugate.  Then
  \[
    \|f\|_{L^{p(\cdot)}((-1,1))} \leq C \|f\|_{H,{p(\cdot)}}
  \]
  for some constant $C$.  Here we may define $p(-x) = p(-1)$ and
  $p(x) = p(1)$ for $x > 0$. 
\end{corollary}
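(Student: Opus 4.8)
The plan is to deduce this immediately from the Carleson measure theorem above, taking $a = 1$ and letting $\mu$ be one-dimensional Lebesgue (arclength) measure on the diameter $(-1,1)$, regarded as a measure on the disc. The first step is to verify that this $\mu$ is a Carleson measure with exponent $a=1$, i.e.\ that $\mu(S) \le C h$ for every Carleson square $S = S_{h,\theta_0}$. This is immediate from the geometry: the diameter meets such a square only when the angular interval $(\theta_0 - h/2, \theta_0 + h/2)$ contains $0$ or $\pi$, that is, only for squares abutting the points $1$ or $-1$, and in that case $(-1,1) \cap S$ is a real segment of length at most $h$; for all other squares $\mu(S) = 0$. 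Hence $\mu(S) \le h$ for every $S$, and applying the Carleson measure theorem with $a = 1$ yields
\[
  \|f\|_{L^{p(\cdot)}(\mu)} \le C \|f\|_{H,p(\cdot)},
\]
where on the right the exponent is $p$ restricted to $(-1,1)$, i.e.\ its values at the interior real points $x$.

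It then remains to replace this interior exponent by the boundary-value exponent $\bar p$ defined by $\bar p(x) = p(1)$ for $x > 0$ and $\bar p(x) = p(-1)$ for $x < 0$; this is the only point that is not completely automatic, and I expect it to be the main (though minor) obstacle. I would handle it exactly as in Theorem \ref{thm:logholder_equiv}. Since $f \in H^{p(\cdot)} \subseteq H^{p_-}$, it obeys the growth bound $|f(x)| \le C(1-|x|)^{-1/p_-}$, and by hypothesis $p$ is log-H\"older continuous in the closed disc, so $|p(x) - p(\pm 1)| \le C/\log\frac{1}{1-|x|}$ as $x \to \pm 1$. Consequently $(1-|x|)^{-|p(x) - \bar p(x)|} \le e^{C}$ near the endpoints, so $|f(x)|^{p(x)}$ and $|f(x)|^{\bar p(x)}$ are comparable wherever $|f(x)| \ge 1$, while on the set where $|f(x)| < 1$ both integrands are at most $1$ and contribute a bounded amount over the finite interval; away from $\pm 1$, on any compact subinterval $f$ is bounded and the two exponents differ by a bounded amount, so the integrands are again comparable. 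This gives $\int_{-1}^1 |f(x)|^{\bar p(x)}\,dx \le C \int_{-1}^1 |f(x)|^{p(x)}\,dx + C$, and translating back to Luxemburg--Nakano norms via the standard norm--modular relations produces the stated inequality for $\bar p$.

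A more self-contained route, mirroring the other arguments of this section, would avoid the Carleson theorem altogether: reduce to $p$ harmonic using Theorem \ref{thm:logholder_equiv} (the boundary values $p(\pm 1)$ are unchanged), decompose $f = B\sum_{j=1}^{n+1} f_j$ as in Theorem \ref{thm:boundedargdecomp_psmall}, and apply the classical Fej\'er--Riesz inequality to each $g_j = f_j^{\widehat p(\cdot)/2} \in H^2$, obtaining $\int_{-1}^1 |g_j(x)|^2\,dx \le C\|g_j\|_{H,2}^2 \le C$. The pointwise comparison \eqref{eq:complexpbd} restricted to the real axis, where the argument bound on $f_j$ and the boundedness of $\widetilde p$ both apply, then gives $\int_{-1}^1 |f_j(x)|^{p(x)}\,dx \le C$, and since $|B| \le 1$ forces $|f(x)| \le \sum_j |f_j(x)|$ one combines these by the (quasi-)triangle inequality for the finite sum. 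Either way, the only thing to watch is the passage from the interior exponent to the boundary exponent $\bar p$; everything else is a direct transcription of the classical inequality.
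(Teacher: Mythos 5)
Your first route is exactly the paper's intended argument: the paper states this corollary as an immediate consequence of the preceding Carleson measure theorem, with $a=1$ and $\mu$ taken to be Lebesgue measure on the diameter $(-1,1)$, for which $\mu(S)\leq 2h$ for every Carleson square as you verify. Your handling of the passage from the interior exponent $p(x)$ to the boundary-value exponent $\bar p$ via the growth bound $|f(x)|\leq C(1-|x|)^{-1/p_-}$ and log-H\"older continuity (the mechanism of Theorem \ref{thm:logholder_equiv}) is correct and supplies a detail the paper leaves implicit, so the proposal is correct and essentially the same as the paper's proof.
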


We now give a variable exponent form of another classical result, which has
applications to composition operators. 
Let $f$ and $F$ be two analytic functions in the unit disc.  Recall that
$f$ is subordinate to $F$, written $f \prec F$, if $f(z) = F(\omega(z))$
where $\omega: \mathbb{D} \rightarrow \mathbb{D}$ and
$|\omega(z)| \leq |z|$. If we wish to specify $\omega$, we will write
$f \prec_\omega F$.

In this theorem, we take $p(\cdot)$ to be harmonic.  The reason is that
$p(\omega(\cdot))$ may not be log-H\"{o}lder continuous, so we need to
specify its values inside the unit disc. 
\begin{theorem}[Littlewood Subordination] Suppose that
  $f \prec_\omega F$ and that $F \in H^{p(\cdot)}$, where
  $p$ is harmonic and log-H\"{o}lder continuous with bounded conjugate.
  Then $f \in H^{p(\omega(\cdot))}$ and
  $\|f\|_{H,p(\omega(\cdot))} \leq C \|F\|_{H,p(\cdot)}$.
  Furthermore,
  \[
    M_{p(\omega(\cdot))}(r,f) \leq C M_{p(\cdot)}(r,F).
  \]
\end{theorem}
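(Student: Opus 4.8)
The plan is to transfer the problem to the classical Hardy space $H^2$ by means of the substitution $h\mapsto h^{\widehat{p}(\cdot)/2}$, exactly as in Theorems \ref{thm:boundedargdecomp_pbig} and \ref{thm:boundedargdecomp_psmall}, and then to appeal to the classical Littlewood subordination principle for subharmonic functions. The starting point is that the composed exponent $q:=p(\omega(\cdot))$ inherits the structure needed for that machinery: since $p$ is harmonic and $\omega$ is analytic, $q$ is harmonic, its harmonic conjugate is $\widetilde{p}\circ\omega$ (so that $\|\widetilde{q}\|_\infty\le\|\widetilde{p}\|_\infty$), and $\widehat{q}=\widehat{p}\circ\omega$. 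The hypothesis $|\omega(z)|\le|z|$ forces $\omega(0)=0$. Hence, even though $q$ itself need not be log-H\"older, the pointwise inequality \eqref{eq:complexpbd} is available for the pieces of $f$ with the same integer $n$ and with $\|\widetilde{p}\|_\infty$.

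First I would establish the norm bound and the membership $f\in H^{p(\omega(\cdot))}$. Applying Theorem \ref{thm:boundedargdecomp_psmall} to $F$, write $F=B\sum_{j=1}^{n+1}F_j$ with each $F_j$ of bounded argument and $\|F_j\|_{H,p(\cdot)}\le C\|F\|_{H,p(\cdot)}$, and put $G_j=F_j^{\widehat{p}(\cdot)/2}\in H^2$. The crucial identity is that, writing $u(w)=|F(w)|^{p(w)}$, one has $|f(z)|^{q(z)}=u(\omega(z))$, so that $M_q^q(r,f)=\frac{1}{2\pi}\int_0^{2\pi}u(\omega(re^{i\theta}))\,d\theta$. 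Were $u$ subharmonic, Littlewood's lemma would finish the proof immediately; the entire difficulty---and what I expect to be the main obstacle---is that for a genuinely variable exponent $|F|^{p}$ is not subharmonic. To get around this I would use $|B|\le1$ together with \eqref{eq:complexpbd} to dominate $u$ pointwise by the subharmonic function $V:=(n+1)^{p_+}e^{\pi n\|\widetilde{p}\|_\infty/2}\sum_{j}|G_j|^2$. Littlewood's subordination principle for the subharmonic $V$ (here is where $\omega(0)=0$ enters) then gives $\frac{1}{2\pi}\int_0^{2\pi}u(\omega(re^{i\theta}))\,d\theta\le\frac{1}{2\pi}\int_0^{2\pi}V(re^{i\theta})\,d\theta$, and letting $r\to1$ the right-hand side tends to $(n+1)^{p_+}e^{\pi n\|\widetilde{p}\|_\infty/2}\sum_j\|G_j\|_{H,2}^2$. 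Using the norm comparison of Theorem \ref{thm:boundedargdecomp_psmall} between $\|G_j\|_{H,2}=\|F_j^{\widehat{p}(\cdot)/2}\|_{H,2}$ and $\|F_j\|_{H,p(\cdot)}\le C\|F\|_{H,p(\cdot)}$, this bounds $\sup_r M_q^q(r,f)$, hence $\|f\|_{H,p(\omega(\cdot))}$, by a constant times a power of $\|F\|_{H,p(\cdot)}$; a normalization $\|F\|_{H,p(\cdot)}\le1$ together with the standard modular/Luxemburg comparisons then gives the stated linear bound and the membership.

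It remains to upgrade this to the pointwise-in-$r$ statement $M_{p(\omega(\cdot))}(r,f)\le C M_{p(\cdot)}(r,F)$. The obstruction is that the estimate above controls $\frac{1}{2\pi}\int_0^{2\pi}V(re^{i\theta})\,d\theta=(n+1)^{p_+}e^{\pi n\|\widetilde{p}\|_\infty/2}\sum_j M_2^2(r,G_j)$ only after $r\to1$: per piece, \eqref{eq:complexpbd} gives $M_2^2(r,G_j)\le e^{\pi n\|\widetilde{p}\|_\infty/2}M_p^p(r,F_j)$ at the given $r$, but the bound $\sum_j M_p^p(r,F_j)\le C M_p^p(r,F)$ at a single radius is not available, since $F=B\sum_j F_j$ is produced globally through harmonic conjugation and Theorem \ref{thm:boundedargdecomp_psmall} controls only the full Hardy norms of the $F_j$. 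My plan to close this gap is a dilation: fix $r$ and replace $F,p,\omega$ by $F_r(w)=F(rw)$, $P(w)=p(rw)$ and $\widetilde{\omega}(w)=\omega(rw)/r$. Then $\widetilde{\omega}\colon\mathbb{D}\to\mathbb{D}$ with $\widetilde{\omega}(0)=0$; $P$ is harmonic and log-H\"older with $\|\widetilde{P}\|_\infty\le\|\widetilde{p}\|_\infty$ and the same log-H\"older constant as $p$, uniformly in $r$; and the identities $f(re^{i\theta})=F_r(\widetilde{\omega}(e^{i\theta}))$ and $q(re^{i\theta})=P(\widetilde{\omega}(e^{i\theta}))$ hold. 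Consequently $M_q^q(r,f)$ equals the integral over the unit circle of $|F_r|^{P}\circ\widetilde{\omega}$, while $M_p^p(r,F)$ equals the integral over the unit circle of $|F_r|^{P}$. In these new variables the circle $\{|z|=r\}$ has become the unit circle, $F_r\in H^{P}$ is admissible for Theorem \ref{thm:boundedargdecomp_psmall}, and the full-norm bound for the decomposition pieces of $F_r$ is exactly the missing radius-$r$ control. Running the domination-and-Littlewood argument of the previous paragraph for $(F_r,P,\widetilde{\omega})$, and finishing with the $\max(t^{1/p_-},t^{1/p_+})$ modular/Luxemburg comparisons, yields $M_{p(\omega(\cdot))}(r,f)\le C M_{p(\cdot)}(r,F)$ with $C$ depending only on $n$, $p_-$, $p_+$, the log-H\"older constant of $p$, and $\|\widetilde{p}\|_\infty$, and in particular independent of $r$.
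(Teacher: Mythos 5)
Your proposal is correct and follows essentially the same route as the paper: decompose $F$ via Theorem \ref{thm:boundedargdecomp_psmall}, transfer to the classical setting through the powers $F_j^{\widehat{p}(\cdot)/2}$, invoke classical Littlewood subordination, and obtain the radius-wise inequality by the dilation $F(rz)$, $p(rz)$, $\omega(rz)/r$ --- which is exactly the paper's one-line ``apply the first part to $f(rz)$ and $F(rz)$,'' here spelled out with the uniformity checks made explicit. The only cosmetic difference is that you dominate the modular by a single subharmonic function $V=C\sum_j |F_j^{\widehat{p}(\cdot)/2}|^2$ and apply the subharmonic form of Littlewood's lemma once, whereas the paper applies the $H^2$ subordination theorem to each $F_j^{\widehat{p}(\cdot)/2}$ separately and then sums the variable-exponent norms.
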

\begin{proof}
  Assume without loss of generality that $\|F\|_{H,p(\cdot)} = 1$. 
  Decompose $F$ as in Theorem \ref{thm:boundedargdecomp_psmall} so that
  \[
    F = B \sum_{j=1}^{n+1} F_j.
  \]
  Then
  \[
    f(z) = B(\omega(z)) \sum_{j=1}^{n+1} F_j(\omega(z)).
  \]
  Recall that $|B(\omega(z))| < 1$. 
  Also 
  $\|F_j(\omega(\cdot))^{\widehat{p}(\omega(\cdot))/2}\|_{H,2} \leq
  \|F_j(\cdot)^{\widehat{p}(\cdot)/2}\|_{H,2}$ by the Littlewood subordination
  theorem for $H^2$.  Since $F_j(\omega(z))$ has bounded argument,
  for $C' = \exp(\|\widetilde{p}(\cdot)\|_\infty \|\arg F_j\|_\infty /2)$ we have
  that
\[
  (1/C') |F_j(\omega(z))|^{p(\omega(z))} \leq
    |F_j^{\widehat{p}(\omega(z))/2}|^2 \leq
    C' |F_j(\omega(z))|^{p(\omega(z))}.
  \]
  This implies that
  $\|F_j(\omega(\cdot))\|_{H,p(\omega(\cdot))}
  \leq C$ %
  for some constant $C$. 
  And thus
  \[
    \|f\|_{H,p(\omega(\cdot))} \leq
    \sum_{j=1}^{n+1} \|F_j(\omega(\cdot))\|_{H,p(\omega(\cdot))}
    \leq C
    \]
    for a different constant $C$.
    This implies in general that
    $\|f\|_{H,p(\omega(\cdot))} \leq C \|F\|_{H,p(\cdot)}$, even if we
    drop the assumption that $\|F\|_{H,p(\cdot)} = 1$. 
    This proves the first part of the theorem.
    The second part follows from applying the first part to the functions
    $f(rz)$ and $F(rz)$. 
  \end{proof}

  \begin{corollary}[{see \cite[Theorem 3.1]{Cowen-Maccluer_Comp}}]
    Let $\mu$ be a finite positive measure on the closed unit interval and
    let $p(\cdot)$ and $\omega$ be as above.
    Then the composition operator $C_\omega$ is bounded
    from $A^{p(\cdot)}_\mu$ to
    $A^{p(\omega(\cdot))}_{\mu}$, where $C_\omega(f)(z) = f(\omega(z))$. 
  \end{corollary}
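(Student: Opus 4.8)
The plan is to imitate the proof of the Littlewood Subordination theorem, but to work at the level of the modular rather than the Luxemburg--Nakano norm, so that the estimate integrates cleanly against $\mu$. Write $q = p\circ\omega$, and recall that $|\omega(z)|\le|z|$ forces $\omega(0)=0$. By homogeneity and the equivalence of the modular and the norm, it suffices to produce a constant $C$ with
\[
\int_0^1 M_{p(\omega(\cdot))}^{p(\omega(\cdot))}(\rho, f\circ\omega)\,d\mu(\rho)\le C\int_0^1 M_{p(\cdot)}^{p(\cdot)}(\rho, f)\,d\mu(\rho)
\]
for every $f\in A^{p(\cdot)}_\mu$, since then $C_\omega f = f\circ\omega$ obeys the desired norm bound.

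The key point is that this holds, with $C$ depending only on $\|\widetilde p\|_\infty$ and the width of the argument, whenever $f$ has bounded argument. Indeed, if $|\arg g|\le M$ on $\mathbb{D}$, put $u = |g|^{p(z)}e^{-\widetilde p(z)\arg g(z)}$. As in \eqref{eq:complexpbd} one has $u = \exp(\Rp(\widehat p\log g)) = |g^{\widehat p/2}|^2$, so $u$ is nonnegative and subharmonic (its logarithm is harmonic off the zeros of $g$ and extends across them), and $e^{-\|\widetilde p\|_\infty M}|g|^{p}\le u\le e^{\|\widetilde p\|_\infty M}|g|^{p}$. Since $\omega(0)=0$, the classical Littlewood subordination principle for subharmonic functions gives, for each $\rho$,
\[
\frac{1}{2\pi}\int_0^{2\pi}u(\omega(\rho e^{i\theta}))\,d\theta\le\frac{1}{2\pi}\int_0^{2\pi}u(\rho e^{i\theta})\,d\theta,
\]
and combining this with the two-sided bound yields the per-circle modular inequality
\[
M_{p(\omega(\cdot))}^{p(\omega(\cdot))}(\rho, g\circ\omega)\le e^{2\|\widetilde p\|_\infty M}\,M_{p(\cdot)}^{p(\cdot)}(\rho, g),
\]
which integrates against $d\mu(\rho)$. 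This is the exact analogue of the step in the Littlewood Subordination theorem where $H^2$ subordination is applied to $F_j^{\widehat p/2}$; keeping the subharmonic majorant at the level of each circle is what lets me pass to the radial measure $\mu$.

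For general $f$ I would decompose $f = B\sum_{j=1}^{n+1}f_j$ into bounded-argument pieces as in Theorem \ref{thm:boundedargdecomp_psmall}, with $\|f_j\|_{A^{p(\cdot)}_\mu}\le C\|f\|_{A^{p(\cdot)}_\mu}$ and $|\arg f_j|<\tfrac{\pi n}{2}$. Since $|B(\omega(z))|\le 1$, we have $f\circ\omega = B(\omega)\sum_j f_j\circ\omega$, so $|f\circ\omega|^{q}\le (n+1)^{p_+}\sum_j|f_j\circ\omega|^{q}$ pointwise; applying the bounded-argument case to each $f_j$ (with $M=\tfrac{\pi n}{2}$) and summing gives the integrated modular bound above, hence $\|f\circ\omega\|_{A^{p(\omega(\cdot))}_\mu}\le C\|f\|_{A^{p(\cdot)}_\mu}$, which is the corollary.

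The main obstacle is the decomposition step in the Bergman setting. Theorem \ref{thm:boundedargdecomp_psmall} is proved for Hardy spaces and its argument uses the boundary values of $f$, the boundedness of harmonic conjugation on $H^{p(\cdot)}$, and (when $p_-<1$) the Blaschke product of the zeros of $f$ --- none of which is directly available for a general function in $A^{p(\cdot)}_\mu$, which need neither have boundary values nor satisfy the Blaschke condition. The natural remedy is to run the decomposition on the dilations $f_r(z)=f(rz)\in H^\infty$ and let $r\to 1$, using that $f_r\to f$ locally uniformly; the genuine difficulty is to produce the pieces with $A^{p(\cdot)}_\mu$-norm control (rather than $H^{p(\cdot)}$-norm control, which blows up as $r\to1$ when $f\notin H^{p(\cdot)}$) that is uniform in $r$. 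This requires the boundedness of the conjugation operator on $A^{p(\cdot)}_\mu$ itself: for the standard weights $d\mu=dA_\alpha$ this is supplied by the Bergman projection estimates of \cite{tjf:bergprojbounds}, while for a general finite $\mu$ on $[0,1]$ this is the point demanding the most care.
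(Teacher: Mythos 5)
Your central lemma is correct and is in fact the engine of the paper's own argument: for a zero-free, bounded-argument piece $g$, the function $u=|g^{\widehat{p}/2}|^2$ is the squared modulus of an analytic function, hence subharmonic, and Littlewood's subordination principle for subharmonic functions (valid circle by circle, since $|\omega(z)|\le|z|$ forces $\omega(0)=0$) yields exactly the per-circle modular inequality
\[
M^{p(\omega(\cdot))}_{p(\omega(\cdot))}(\rho,\,g\circ\omega)\ \le\ e^{2\|\widetilde{p}\|_\infty M}\, M^{p(\cdot)}_{p(\cdot)}(\rho,\,g).
\]
Your decision to work at the modular level is also the right one: the paper's proof is literally ``per-circle estimate, then integrate $d\mu(r)$,'' and a Luxemburg-norm inequality on each circle does not convert into a modular inequality with a uniform constant when $p_-<p_+$, so the $\mu$-integration genuinely requires the modular statement. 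Your subharmonicity argument supplies it explicitly, whereas in the paper it is implicit in the fact that the key subordination step is performed in $H^2$, where norm and modular coincide after squaring. Up to this point your route and the paper's coincide.

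The gap is the one you flag yourself in the final paragraph, and it is self-inflicted: no decomposition of $f$ inside $A^{p(\cdot)}_\mu$ is needed, hence no boundedness of harmonic conjugation on $A^{p(\cdot)}_\mu$, no Bergman-projection input, and no uniform-in-$r$ limiting procedure. The paper's Littlewood subordination theorem already contains the fix: its per-radius clause ($M_{p(\omega(\cdot))}(r,f)\le C M_{p(\cdot)}(r,F)$) is obtained by applying the first part of that theorem to the dilations $f(rz)$ and $F(rz)$. Since $F(rz)\in H^\infty\subset H^{p(\cdot)}$ for \emph{every} analytic $F$ and every $r<1$, and since the constants in Theorem \ref{thm:boundedargdecomp_psmall} depend only on $n$, $p_+$, the log-H\"older constant and $\|\widetilde{p}\|_\infty$ --- never on the function being decomposed --- the decomposition can be carried out afresh on each dilation, one radius at a time, and the per-circle inequality holds for every $F\in A^{p(\cdot)}_\mu$ with a single constant. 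No piece ever needs an $A^{p(\cdot)}_\mu$-norm bound, which is fortunate, because for a general finite $\mu$ on $[0,1]$ the conjugation estimates you propose to develop are not available; the dilation trick is the only route, and it is the paper's. The paper's proof of the corollary is then exactly your step one: invoke the per-circle estimate for each $0\le r<1$ and integrate with respect to $d\mu(r)$. Replacing your final paragraph by this observation --- decompose $f_\rho=f(\rho\,\cdot)\in H^\infty$ at each $\rho$, apply your subharmonic lemma to its pieces, sum using $|B(\omega(z))|\le 1$ and $(a+b)^{p}\lesssim a^p+b^p$ adjustments as you already do, and integrate $d\mu(\rho)$ --- closes your proof and brings it in line with the paper's.
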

  \begin{proof}
    Suppose that $f$ has unit $A^{p(\cdot)}_{\mu}$ norm. 
    For each $0 \leq r < 1$ we have by the above theorem that
    \[
      \int_0^{2\pi}
      |f(\omega(re^{i\theta}))|^{p(\omega(r e^{i\theta}))} d\theta
      \leq
      C.
    \]
    Integration with respect to $d\mu(r)$ now gives the result. 
\end{proof}

    \begin{corollary}
    Let $\phi: \mathbb{D} \rightarrow \mathbb{D}$, and let
    $C_\phi$  be the composition operator defined by
    $(C_\phi f)(z) = f(\phi(z))$.
    Let $p$ be a harmonic log-H\"{o}lder continuous exponent with bounded
    conjugate such that $0 < p_{-} \leq p_{+} < \infty$. 
    Then $C_\phi$ is bounded from
    $A^{p(\cdot)}_\alpha$ to $A^{p(\phi(\cdot))}_\alpha$ 
    for $-1 \leq \alpha < \infty$.
  \end{corollary}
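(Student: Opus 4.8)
The plan is to reduce to the subordination case already handled by the preceding corollary (and, for $\alpha=-1$, by the Littlewood Subordination theorem) by factoring $\phi$ through a disc automorphism. Set $a=\phi(0)$ and let $\varphi_a(z)=\frac{a-z}{1-\overline{a}z}$ be the involutive automorphism interchanging $0$ and $a$. Then $\psi:=\varphi_a\circ\phi$ satisfies $\psi(0)=\varphi_a(a)=0$, so by the Schwarz lemma $|\psi(z)|\le|z|$; that is, $\psi$ is a subordination map. Since $\varphi_a$ is an involution we have $\phi=\varphi_a\circ\psi$, and hence $C_\phi=C_\psi\circ C_{\varphi_a}$, where $C_\psi g=g\circ\psi$ and $C_{\varphi_a}f=f\circ\varphi_a$. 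It therefore suffices to show that $C_{\varphi_a}$ is bounded from $A^{p(\cdot)}_\alpha$ into $A^{p(\varphi_a(\cdot))}_\alpha$ and that $C_\psi$ is bounded from $A^{p(\varphi_a(\cdot))}_\alpha$ into $A^{p(\varphi_a(\psi(\cdot)))}_\alpha=A^{p(\phi(\cdot))}_\alpha$.

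For the automorphism factor I would argue by change of variables. Because $|a|<1$, the map $\varphi_a$ is analytic with nonvanishing derivative on a neighborhood of $\overline{\mathbb{D}}$, hence bi-Lipschitz on $\overline{\mathbb{D}}$, and it maps $\mathbb{D}$ onto $\mathbb{D}$. For $-1<\alpha<\infty$, substituting $w=\varphi_a(z)$ in the modular and using $1-|\varphi_a(w)|^2=\frac{(1-|a|^2)(1-|w|^2)}{|1-\overline{a}w|^2}$ and $|\varphi_a'(w)|^2=\frac{(1-|a|^2)^2}{|1-\overline{a}w|^4}$ gives
\[
  \rho_{p(\varphi_a(\cdot))}(f\circ\varphi_a)
  =\int_{\mathbb{D}}|f(w)|^{p(w)}\,J(w)\,dA_\alpha(w),
  \qquad
  J(w)=\frac{(1-|\varphi_a(w)|^2)^\alpha|\varphi_a'(w)|^2}{(1-|w|^2)^\alpha}
  =(1-|a|^2)^{\alpha+2}|1-\overline{a}w|^{-2\alpha-4}.
\]
Since $1-|a|\le|1-\overline{a}w|\le 1+|a|$, the factor $J(w)$ is bounded above and below by positive constants depending only on $a$ and $\alpha$, so $\rho_{p(\varphi_a(\cdot))}(f\circ\varphi_a)\asymp\rho_{p(\cdot)}(f)$. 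A two-sided modular bound yields a norm bound via the Luxemborg-Nakano relation $\rho(g/\mu)\le\mu^{-p_-}\rho(g)$ for $\mu\ge1$ (choose $\mu=\max(1,\rho(g)^{1/p_-})$ to force $\rho(g/\mu)\le1$), so $\|f\circ\varphi_a\|_{A^{p(\varphi_a(\cdot))}_\alpha}\le C\|f\|_{A^{p(\cdot)}_\alpha}$. For $\alpha=-1$ the identical computation is carried out on the boundary circle via $e^{it}=\varphi_a(e^{i\theta})$, whose Jacobian $|\varphi_a'|$ is again bounded above and below.

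To apply the subordination result to $C_\psi$ I must check that the intermediate exponent $q:=p\circ\varphi_a$ still satisfies the standing hypotheses. It is harmonic, being the composition of the harmonic $p$ with the holomorphic $\varphi_a$. It is log-H\"{o}lder continuous on $\overline{\mathbb{D}}$ because $\varphi_a$ is bi-Lipschitz there and precomposition by a bi-Lipschitz map preserves log-H\"{o}lder continuity (for small $|z-w|$ the additive constant $\log L$ from the Lipschitz bound is absorbed into $\tfrac12\log\tfrac{1}{|z-w|}$). Finally $q$ has bounded conjugate: writing $\widehat{p}=p+i\widetilde{p}$, the function $\widehat{p}\circ\varphi_a$ is holomorphic with real part $q$, so $\widetilde{p}\circ\varphi_a$ is a harmonic conjugate of $q$, and since $\varphi_a$ is onto $\mathbb{D}$ we have $\|\widetilde{p}\circ\varphi_a\|_\infty=\|\widetilde{p}\|_\infty<\infty$. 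With $q$ admissible and $\psi$ a subordination map, the preceding corollary (for $-1<\alpha<\infty$, viewing $A^{q}_\alpha=A^{q}_\mu$ for the appropriate finite positive radial measure $\mu$; and for $\alpha=-1$, the Littlewood Subordination theorem for $H^{q}$) shows $C_\psi$ is bounded from $A^{q}_\alpha$ to $A^{q\circ\psi}_\alpha=A^{p\circ\phi}_\alpha$. Composing the two bounded operators proves the corollary.

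I expect the main obstacle to be not the factorization itself but the verification that the three standing properties --- harmonic, log-H\"{o}lder continuous, bounded conjugate --- survive precomposition by $\varphi_a$. The bounded-conjugate claim is the delicate point, and it rests precisely on the holomorphy of $\widehat{p}\circ\varphi_a$ together with the surjectivity of $\varphi_a$ onto $\mathbb{D}$, which is what prevents the sup-norm of the conjugate from increasing under composition.
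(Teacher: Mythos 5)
Your proposal is correct and takes essentially the same route as the paper: it factors $\phi = \varphi_a \circ \psi$ with $\psi(0)=0$ (the paper's $\phi = \phi_\lambda \circ \omega$), handles the automorphism by the same change of variables using $1-|\varphi_a(w)|^2 = (1-|a|^2)(1-|w|^2)/|1-\overline{a}w|^2$ and the two-sided bound on $|\varphi_a'|$, and disposes of $\psi$ via the subordination results. The only difference is that you explicitly verify the points the paper leaves implicit --- the modular-to-norm conversion, the $\alpha=-1$ boundary case, and that $p \circ \varphi_a$ stays harmonic, log-H\"{o}lder continuous, and of bounded conjugate (via holomorphy of $\widehat{p}\circ\varphi_a$) --- which is careful rather than divergent.
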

  \begin{proof}
    Suppose that $\phi(0) = \lambda$. 
    We may write $\phi = \phi_\lambda \circ \omega$, where
    $\omega: \mathbb{D} \rightarrow \mathbb{D}$ and
    $\omega(0) = 0$, and
    \[
      \phi_\lambda(z) = \frac{\lambda-z}{1 - \overline{\lambda}z}.
    \]
    Note that $\phi_\lambda$ is its own inverse.
    The previous corollary implies that $C_\omega$ is bounded from
    $H^{p(\phi_\lambda(\cdot))}$ to
    $H^{p(\omega(\phi_\lambda(\cdot)))} = H^{p(\phi(\cdot))}$.
    We will show that $C_{\phi_\lambda}$ is bounded from
    $H^{p(\cdot)}$ to $H^{p(\phi_\lambda(\cdot))}$. 
    Note that (see \cite[p.\ 38]{D_Ap})
    \[
      1 - |\phi_\lambda(z)|^2 =
      \frac{(1-|\lambda|^2)(1-|z|^2)}{(1-\overline{\lambda}z)^2}
    \]
    and $|\phi_\lambda'(z)|$ is bounded above and below in the
    unit disc. 
    Thus 
    \[
      \begin{split}
        &\phantom{{}={}}\int_{\mathbb{D}} |f(\phi_\lambda(z))|^{p(\phi_\lambda(z))}
        (1-|z|^2)^{\alpha} dA(z) \\
        &=
      \int_{\mathbb{D}} |f(\zeta)|^{p(\zeta)}
            (1-|\phi_\lambda(\zeta)|^2)^{\alpha}
            |\phi_\lambda'(\zeta)|^2 \, dA(\zeta)\\
      &\leq
      C \int_{\mathbb{D}} |f(\zeta)|^{p(\zeta)}
            (1-|\zeta|^2)^{\alpha} \, dA(\zeta),
      \end{split}
    \]
    where we have used the substitution $\zeta = \phi_\lambda(z)$.
    Here $C$ may depend on $\lambda$. 
    Thus $C_{\phi_\lambda}$ is bounded from $H^{p(\cdot)}$ to
    $H^{p(\phi_\lambda(\cdot))}$.  Since
    $C_{\phi} = C_\omega C_{\phi_\lambda}$, the result
    follows. 
    \end{proof}

\section{Radial Exponents and $A^q$}

For the results of this section, we will assume that $p$ is
a radial exponent, 
and that $0 < q \leq p(z) < \infty$ for some constant exponent $q$. 
We will find a necessary and sufficient condition 
that $A^q = A^{p(\cdot)}$ (as sets).
Note that if $A^q = A^{p(\cdot)}$, then the two spaces 
have equivalent norms by the closed graph theorem.  By H\"{o}lder's 
inequality for variable exponent spaces we see that 
$A^{p(\cdot)} \subset A^q$,
so the only question is whether $A^q \subset A^{p(\cdot)}$. 

We will need the following theorem, which may be of interest in itself. 
\begin{theorem}\label{thm:incmult}
Suppose that $f(x)$ is a nonnegative measurable 
function on $[0,1]$.  The following are equivalent:
\begin{enumerate}
\item \label{conda} For any nonnegative
  increasing $g$ in $L^1$, we have 
$\ds \int_0^1 f(x) g(x) \, dx \leq C \int_0^1 g(x) \, dx$ for some constant 
$C$.
\item \label{condb} For all $0 \leq x < 1$ we have 
$\ds \frac{1}{x} \int_{1-x}^1 f(t) \, dt \leq C$ for some constant $C$.
\item \label{condc} For all $0 \leq x < 1$ we have 
$\ds \frac{2}{x} \int_{1-x}^{1-(x/2)} f(t) \, dt \leq C$ for some constant $C$.
\end{enumerate}
\end{theorem}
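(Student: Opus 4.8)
The plan is to prove the four implications $(1)\Rightarrow(2)$, $(2)\Rightarrow(3)$, $(3)\Rightarrow(2)$, and $(2)\Rightarrow(1)$; together these give all the equivalences (they establish $(1)\Leftrightarrow(2)$ and $(2)\Leftrightarrow(3)$). Throughout it is convenient to set $F(t)=\int_t^1 f(s)\,ds$ and, via the substitution $t=1-x$, to read condition $(2)$ as the tail bound $F(t)\le C(1-t)$ for all $t\in[0,1)$. Conditions $(2)$ and $(3)$ only compare full tails against dyadic half-tails, so I expect that pair to be routine; the substance of the theorem lies in $(2)\Rightarrow(1)$.

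For $(1)\Rightarrow(2)$ I would test $(1)$ against the indicator $g=\chi_{[1-x,1]}$ for $x\in(0,1)$, which is nonnegative, nondecreasing, and bounded, hence in $L^1$. Since $\int_0^1 g = x$ and $\int_0^1 fg=\int_{1-x}^1 f = F(1-x)$, condition $(1)$ yields exactly $F(1-x)\le Cx$, which is $(2)$. The implication $(2)\Rightarrow(3)$ is immediate from $\int_{1-x}^{1-x/2} f\le\int_{1-x}^1 f$, costing only a factor of $2$. For $(3)\Rightarrow(2)$ I would decompose $[1-x,1)$ into the dyadic pieces $[1-x/2^k,\,1-x/2^{k+1})$ for $k\ge 0$; each such piece is precisely the interval occurring in $(3)$ with parameter $x_k=x/2^k$, so $(3)$ bounds $\int_{1-x_k}^{1-x_k/2} f\le Cx_k/2 = Cx/2^{k+1}$, and summing the geometric series gives $\int_{1-x}^1 f\le Cx$, i.e.\ $(2)$.

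The key step is $(2)\Rightarrow(1)$, and here the idea is to reduce a general nonnegative nondecreasing $g\in L^1$ to the indicator case through the layer-cake formula $g(x)=\int_0^\infty \chi_{\{g>\lambda\}}(x)\,d\lambda$. Because $g$ is nondecreasing, each level set $\{g>\lambda\}$ is, up to an endpoint, an interval $(a_\lambda,1]$ with $a_\lambda=\inf\{x:g(x)>\lambda\}$, so $|\{g>\lambda\}|=1-a_\lambda$. Applying Tonelli's theorem (all integrands are nonnegative) and then the tail bound $F(a_\lambda)\le C(1-a_\lambda)$ gives
\[
  \int_0^1 fg = \int_0^\infty \Big(\int_{a_\lambda}^1 f\Big)\,d\lambda = \int_0^\infty F(a_\lambda)\,d\lambda \le C\int_0^\infty (1-a_\lambda)\,d\lambda = C\int_0^\infty |\{g>\lambda\}|\,d\lambda = C\int_0^1 g,
\]
where the last equality is the layer-cake formula once more. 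The main obstacle, which is really the only point demanding care, is the measure-theoretic bookkeeping in this reduction: verifying that each level set is genuinely an interval of the stated measure (handling the endpoint $a_\lambda$ and possible jumps of $g$), confirming the hypotheses of Tonelli, and noting that the resulting finiteness of $\int_0^1 fg$ is exactly the content of $(1)$. None of this is deep, but it is where the argument must be made rigorous.
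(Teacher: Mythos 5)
Your proposal is correct, but for the crucial implication it takes a genuinely different route from the paper. The paper closes a three-step cycle $(1)\Rightarrow(2)\Rightarrow(3)\Rightarrow(1)$: its key step $(3)\Rightarrow(1)$ works directly on the product $fg$, bounding $\int_{1-2^{-n+1}}^{1-2^{-n}} f(x)g(x)\,dx \le g(1-2^{-n})\int_{1-2^{-n+1}}^{1-2^{-n}} f(x)\,dx \le C2^{-n}g(1-2^{-n})$ on each dyadic block via monotonicity of $g$ and condition $(3)$, and then re-absorbing the sum $\sum_n 2^{-n}g(1-2^{-n})$ into $2\int_{1/2}^1 g(x)\,dx$, again by monotonicity. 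You instead prove $(3)\Rightarrow(2)$ by summing the half-tail bounds over dyadic scales (essentially the paper's geometric-series idea, but applied to $f$ alone rather than to $fg$), and then obtain $(2)\Rightarrow(1)$ via the layer-cake formula, exploiting the fact that superlevel sets of a nondecreasing function are intervals $(a_\lambda,1]$ anchored at $1$, so that condition $(1)$ tested on indicators is literally condition $(2)$. Both arguments are elementary and of comparable length; the paper's is more hands-on and needs only the weakest-looking hypothesis $(3)$ directly, while yours isolates the structural reason the theorem holds (nondecreasing functions are superpositions of right-anchored indicators) and gives $(1)$ with exactly the constant from $(2)$, with no loss of a factor $2$. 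The bookkeeping you flag is indeed routine: $\{g>\lambda\}$ differs from $(a_\lambda,1]$ by at most the single point $a_\lambda$, joint measurability of $(x,\lambda)\mapsto\chi_{\{g>\lambda\}}(x)$ is standard, and the one case not literally covered by $(2)$ as stated, namely $a_\lambda=0$ (corresponding to $x=1$), follows by monotone convergence on letting $x\to 1^-$.
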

Note that condition \eqref{condb} may be thought of as saying that the 
maximal function of $f$ is finite at $1$.
\begin{proof}
  It is clear that \eqref{conda} implies \eqref{condb} by taking
  $g(x) = (1/x) \chi_{[1-x,1)}(x)$. 
  It is also clear that 
\eqref{condb} implies \eqref{condc}. 

Now suppose that \eqref{condc} holds. Let $g$ be a function as in 
\eqref{conda}.  
Then  
\[
\int_{1-2^{-n+1}}^{1-2^{-n}} f(x) g(x) \, dx \leq 
g(1-2^{-n}) \int_{1-2^{-n+1}}^{1-2^{-n}} f(x) \, dx \leq 
C 2^{-n} g(1-2^{-n}).
\]
So 
\[
\begin{split}
\int_0^1 f(x) g(x) \, dx \leq C \sum_{n=1}^\infty 2^{-n} g(1-2^{-n}) 
&\leq 2C \sum_{n=1}^\infty 2^{-n-1} g(1-2^{-n}) \\  &\leq 
2C \int_{1/2}^1 g(x) \, dx
\end{split}
\]
which shows that \eqref{conda} holds.
\end{proof}

For a radial exponent $p(\cdot)$ let 
\[
M_{p(\cdot)}^{p(\cdot)}(r,f) = 
\frac{1}{2\pi} \int_0^{2\pi} |f(re^{i\theta})|^{p(r)} \,
  d\theta .
\]
Then we have
\[
\int_{\mathbb{D}} |f(z)|^{p(z)} \, dA(z) = 
\int_0^1 M_{p(r)}^{p(r)}(r,f)\,  r \, dr.
\]

We need the following lemma on the growth of integral means.
Here $p$ can be thought of as a constant exponent, since the radius
is fixed. 
\begin{lemma}\label{lemma:growth}
Suppose that $\|f\|_{A^q} \leq 1$ and $q \leq p < \infty$.  
Then
\[
M_p^p(r,f) \leq  M_q^q(r,f) \frac{1}{(1-r)^{2(p-q)/q}}.
\]
\end{lemma}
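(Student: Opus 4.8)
The plan is to reduce the lemma to the pointwise growth estimate for unit-norm Bergman functions that was already used in the proof of Theorem \ref{thm:logholder_equiv}. Here $A^q$ is the unweighted Bergman space, which in the notation of the introduction is $A^q_\alpha$ with $\alpha = 0$, since $dA_0 = dA$. First I would record that because $\|f\|_{A^q} \leq 1$, the point evaluation bound from \cite{Dragan_point_eval}, applied with $\alpha = 0$ and constant exponent $q$ in place of $p_-$, gives
\[
  |f(re^{i\theta})| \leq \frac{1}{(1-r)^{2/q}}
\]
for every $\theta$ and every $0 \leq r < 1$. This is exactly the estimate invoked earlier, specialized to the unweighted case.

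Next I would exploit the hypothesis $q \leq p$ to split the integrand. Since $p - q \geq 0$, I write $|f(re^{i\theta})|^p = |f(re^{i\theta})|^q\,|f(re^{i\theta})|^{p-q}$ and bound only the second factor by the growth estimate raised to the nonnegative power $p-q$:
\[
  |f(re^{i\theta})|^{p-q} \leq \frac{1}{(1-r)^{2(p-q)/q}}.
\]
Crucially, this upper bound depends only on $r$, not on $\theta$, so it may be pulled out of the angular integral.

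Finally I would multiply and integrate in $\theta$. Inserting the factored estimate into $M_p^p(r,f) = \frac{1}{2\pi}\int_0^{2\pi} |f(re^{i\theta})|^p\,d\theta$ and extracting the radial factor yields
\[
  M_p^p(r,f) \leq \frac{1}{(1-r)^{2(p-q)/q}}\,\frac{1}{2\pi}\int_0^{2\pi} |f(re^{i\theta})|^q\,d\theta
  = M_q^q(r,f)\,\frac{1}{(1-r)^{2(p-q)/q}},
\]
which is the claimed inequality.

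There is no serious obstacle in this argument; it is a one-line pointwise estimate followed by integration. The only point requiring care is verifying that the exponent in the growth bound is exactly $2/q$ (the $\alpha = 0$ case), so that raising it to the power $p-q$ produces precisely the factor $(1-r)^{-2(p-q)/q}$ in the statement, with no stray multiplicative constant. This matching of exponents is why the lemma can be stated with a clean constant, and it is the reason the unweighted normalization must be tracked explicitly when borrowing the estimate from Theorem \ref{thm:logholder_equiv}.
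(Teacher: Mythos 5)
Your proposal is correct and matches the paper's proof essentially line for line: the paper likewise cites \cite{Dragan_point_eval} to get $M_\infty(r,f) \leq (1-r)^{-2/q}$ for unit-norm $f \in A^q$, factors $|f|^p = |f|^q\,|f|^{p-q}$, bounds the second factor by its supremum over $\theta$, and pulls the radial factor out of the angular integral. Your remark about tracking the unweighted normalization ($\alpha = 0$, exponent exactly $2/q$) is a fair point of care but introduces nothing beyond what the paper does implicitly.
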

\begin{proof}
  The
  hypothesis on $f$ implies that $M_{\infty}(r,f) \leq  (1-r)^{-2/q}$
  (see \cite{Dragan_point_eval}). 
Thus 
\[
\begin{split}
\frac{1}{2\pi} \int_0^{2\pi} |f(re^{i\theta})|^p \, d\theta  &\leq 
\left(\sup_{0 \leq \theta < 2 \pi} |f(re^{i\theta})|^{p-q} \right)
  \frac{1}{2\pi} \int_0^{2\pi} |f(re^{i\theta})|^q \, d\theta \\ &\leq 
\frac{1}{(1-r)^{2(p-q)/q}} M_q^q(r,f).
\end{split}
\]
\end{proof}

We now come to the main result. 
\begin{theorem}\label{thm:radial}
  Let $q>0$ be given and let $p(z)$ be a variable exponent on the disc with
  $q \leq p(z) \leq \esssup p < \infty$. Also assume that $p(z) = p(|z|)$
  for all $z$.  Then the following are equivalent. 
\begin{enumerate}
\item \label{condi} $A^q = A^{p(\cdot)}$.  
\item \label{condii} For all $f$ in $A^q$ with $\|f\|_{A^q} = 1$, there is a
  constant $K$ such that
\[
\int_0^1 M_{p(r)}^{p(r)}(r,f) \, r\, dr \leq K \int_0^1 M_q^q(r,f) \, r\, dr
\]
\item \label{condiii} For all $f$ in $A^q$ there is a constant $K$ such that  
\[ \int_0^1 \frac{1}{(1-r)^{2(p(r)-q)/q}} M_q^q(r,f)\, r \, dr
   \leq K \int_0^1 M_q^q(r,f)\, r \, dr. 
\]
\item \label{condiv}
There is a constant $K$ such that 
for any increasing left continuous nonnegative function $g$ we have 
\[\int_0^1 \frac{1}{(1-r)^{2(p(r)-q)/q}} g(r) \, dr \leq K \int_0^1 g(r) \, dr.\]
\item \label{condv}
There is a constant $K$ such that 
for all $x$ such that $0 < x \leq 1$, it is the case that
$\ds \frac{1}{x} \int_{1-x}^1 \frac{1}{(1-r)^{2(p(r)-q)/q}} \leq K.$
\item \label{condvi}
There is a constant $K$ such that 
for all $x$ such that $0 < x \leq 1$, it is the case that
$\ds \frac{2}{x} \int_{1-x}^{1-(x/2)} \frac{1}{(1-r)^{2(p(r)-q)/q}} \leq K.$
\item \label{condvii}
For some $a$ such that $aq > 2$ 
there is a constant $C'$
independent of $\lambda$ so that
\[
\int_{\mathbb{D}} |K_{\lambda,a,q}(z)|^{p(z)} \, dz \leq C',
\]
where $\ds
K_{\lambda,a,q}(z) = \frac{(1-|\lambda|^2)^{a-(2/q)}}
{(1-\overline{\lambda}z)^a}$.
\item \label{condviii}
  The previous statement holds for all $a$ with $aq > 2$
  (where $C'$ may depend on $a$ and $q$). 
\end{enumerate}
\end{theorem}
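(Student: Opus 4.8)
The plan is to establish all eight statements as a single cycle of implications visiting every condition, disposing first of the links that follow directly from the tools already developed and isolating the kernel estimates as the real work. It is natural to think of the conditions in four pairs: the space/norm conditions \eqref{condi}--\eqref{condii}, the integral-mean conditions \eqref{condiii}--\eqref{condiv}, the maximal conditions \eqref{condv}--\eqref{condvi}, and the reproducing-kernel conditions \eqref{condvii}--\eqref{condviii}. The equivalence \eqref{condi} $\Leftrightarrow$ \eqref{condii} is the abstract one: since $A^{p(\cdot)}\subseteq A^q$ always, equality of the spaces is equivalent by the closed graph theorem to a bound $\|f\|_{A^{p(\cdot)}}\le C\|f\|_{A^q}$, and translating between the Luxemborg--Nakano norm and the modular $\int_{\mathbb D}|f|^{p(z)}\,dA=\int_0^1 M_{p(r)}^{p(r)}(r,f)\,r\,dr$, while using $\int_0^1 M_q^q(r,f)\,r\,dr=\|f\|_{A^q}^q$, turns this into the uniform modular bound of \eqref{condii} on the unit sphere of $A^q$. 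The equivalence \eqref{condiv} $\Leftrightarrow$ \eqref{condv} $\Leftrightarrow$ \eqref{condvi} is exactly Theorem~\ref{thm:incmult} applied to $f(x)=(1-x)^{-2(p(x)-q)/q}$, under which \eqref{conda}, \eqref{condb}, \eqref{condc} become \eqref{condiv}, \eqref{condv}, \eqref{condvi}; the $L^1$ hypothesis there is harmless (otherwise the right side is infinite) and left continuity does not affect the integrals.

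Next I would connect the integral-mean group to the rest. Lemma~\ref{lemma:growth} gives \eqref{condiii} $\Rightarrow$ \eqref{condii} at once: its pointwise bound $M_p^p(r,f)\le (1-r)^{-2(p-q)/q}M_q^q(r,f)$ for unit-norm $f$, integrated in $r$, is precisely the modular inequality. For \eqref{condiv} $\Rightarrow$ \eqref{condiii} I would test \eqref{condiv} with $g(r)=r\,M_q^q(r,f)$, which is nonnegative and increasing (since $M_q^q(r,f)$ is nondecreasing by subharmonicity of $|f|^q$), so \eqref{condiv} yields the weighted inequality \eqref{condiii}. To feed the information from \eqref{condi}--\eqref{condiii} into the kernel world I would prove \eqref{condii} $\Rightarrow$ \eqref{condvii}: using Lemma~\ref{lemma:poissonbound} one checks $\|K_{\lambda,a,q}\|_{A^q}\asymp 1$ uniformly in $\lambda$ for any fixed $a$ with $aq>2$, and normalizing and applying \eqref{condii} bounds $\int_{\mathbb D}|K_{\lambda,a,q}|^{p(z)}\,dz$ uniformly. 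Finally \eqref{condviii} $\Rightarrow$ \eqref{condvii} is immediate.

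The heart of the argument, and the main obstacle, is the equivalence between the kernel and maximal conditions, namely \eqref{condvii} $\Rightarrow$ \eqref{condvi} and \eqref{condvi} $\Rightarrow$ \eqref{condviii}. Since $p$ is radial and area measure is rotation invariant, I may take $\lambda=s\in(0,1)$ real, and I write $x=1-s$, $y=1-r$, and $I(s)=\int_{\mathbb D}|K_{s,a,q}(z)|^{p(z)}\,dz$. For \eqref{condvii} $\Rightarrow$ \eqref{condvi} I would bound $I(s)$ from below by restricting the integral to the sector $\{x/2<1-|z|<x,\ |\arg z|<x\}$, on which $|1-sz|\asymp x$ and the integrand is $\asymp x^{-(2/q)p(r)}$; the angular integration contributes a factor $\asymp x$, and as $x\asymp y$ on this sector one recovers a constant multiple of $\tfrac1x\int_{1-x}^{1-x/2}(1-r)^{-2(p(r)-q)/q}\,dr$, the left side of \eqref{condvi}. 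Hence a uniform bound on $I(s)$ forces \eqref{condvi}.

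For the converse \eqref{condvi} $\Rightarrow$ \eqref{condviii} I would perform the angular integration with Lemma~\ref{lemma:poissonbound}, obtaining $I(s)\asymp\int_0^1 x^{(a-2/q)p(r)}(x+y)^{1-ap(r)}\,dy$, and split at $y=x$. On the near region $y<x$ one has $x+y\asymp x$ and the integrand is $\asymp x^{1-(2/q)p}=x^{-1}x^{-2(p-q)/q}$; since $x^{-2(p-q)/q}\le y^{-2(p-q)/q}$ when $y\le x$ and $p\ge q$, this part is at most $\tfrac1x\int_0^x y^{-2(p(1-y)-q)/q}\,dy\le K$ by \eqref{condv}, a bound that is notably independent of $a$. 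On the far region $y>x$ one has $x+y\asymp y$; writing the integrand as $(x/y)^{(a-2/q)p+1}\,x^{-1}y^{-2(p-q)/q}$ and decomposing into dyadic annuli $y\in(2^kx,2^{k+1}x)$, the first factor is $\lesssim 2^{-k(aq-1)}$ (using $p\ge q$ and $a>2/q$) while the dyadic maximal bound \eqref{condvi} at scale $2^kx$ contributes $\lesssim K\,2^kx$, so the $k$-th annulus gives $\lesssim K\,2^{-k(aq-2)}$. The geometric series converges exactly because $aq>2$, which is where this hypothesis is essential and why the constant $C'$ depends on $a$ and $q$. Summing shows $I(s)\lesssim_{a,q}K$ uniformly, which is \eqref{condviii}. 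Assembling the pieces, the implications \eqref{condi} $\Rightarrow$ \eqref{condii} $\Rightarrow$ \eqref{condvii} $\Rightarrow$ \eqref{condvi} $\Rightarrow$ \eqref{condv} $\Rightarrow$ \eqref{condiv} $\Rightarrow$ \eqref{condiii} $\Rightarrow$ \eqref{condii} $\Rightarrow$ \eqref{condi}, together with \eqref{condvi} $\Rightarrow$ \eqref{condviii} $\Rightarrow$ \eqref{condvii}, close into a cycle through all eight conditions.
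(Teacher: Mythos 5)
Your proposal is correct, and its skeleton coincides with the paper's: both use the closed graph theorem for \eqref{condi} $\Leftrightarrow$ \eqref{condii}, Lemma \ref{lemma:growth} for \eqref{condiii} $\Rightarrow$ \eqref{condii}, testing \eqref{condiv} against the increasing function $r M_q^q(r,f)$ for \eqref{condiv} $\Rightarrow$ \eqref{condiii}, Theorem \ref{thm:incmult} for \eqref{condiv} $\Leftrightarrow$ \eqref{condv} $\Leftrightarrow$ \eqref{condvi}, and kernel testing to connect the maximal conditions back to the function-space conditions. You diverge from the paper in the kernel step, in two ways. First, for \eqref{condvii} $\Rightarrow$ \eqref{condvi} the paper works with integral means: it computes $M_{p(\rho)}^{p(\rho)}(\rho,K_r)$ and $M_q^q(\rho,K_r)$ via the sharp constant in Lemma \ref{lemma:poissonbound} (needed to make the implied constants uniform in $r,\rho$), compares them on $[r,(1+r)/2]$ where $1-r^2\asymp 1-r\rho\asymp 1-\rho$, and uses the lower bound $M_q^q(\rho,K_r)\geq C(1-r)^{-1}$; your sector argument, restricting to $\{x/2<1-|z|<x,\ |\arg z|<x\}$ with $x=1-s$, gets the same lower bound for the modular of $K_{s,a,q}$ from purely pointwise estimates $|1-sz|\asymp x$, which is more elementary and sidesteps any uniformity concerns in the angular integral. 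Second, your chain obtains \eqref{condviii} via a direct implication \eqref{condvi} $\Rightarrow$ \eqref{condviii} (near/far split at $y=x$, with the near part controlled by \eqref{condv} independently of $a$ and the far part summed over dyadic annuli with geometric decay $2^{-k(aq-2)}$, transparently showing where $aq>2$ enters); the paper never proves this implication, instead getting \eqref{condii} $\Rightarrow$ \eqref{condviii} essentially for free from the uniform bound $\|K_{\lambda,a,q}\|_{A^q}\leq C(a,q)$ supplied by the Forelli--Rudin-type estimate. The paper's route is shorter; yours costs a genuine computation but yields a self-contained quantitative equivalence between the maximal condition and the kernel bounds for all admissible $a$, without passing through the space equality. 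I verified the exponent bookkeeping in your dyadic estimate (the factor $(x/y)^{(a-2/q)p+1}\lesssim 2^{-k(aq-1)}$ uses $p\geq q$ and $a>2/q$, and \eqref{condvi} at scale $2^kx$ gives the stated $K2^kx$), and it is sound; the only points worth making explicit in a write-up are that \eqref{condvi} for $x$ bounded away from $0$ is trivial since $p_+<\infty$, and that rotation invariance (from radiality of $p$) justifies taking $\lambda$ real.
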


Note that the following estimate holds: 
\[
  I_{\alpha,\beta}(z) := \int_{\mathbb{D}}
  \frac{(1-|w|^2)^{\alpha}}{|1-z\overline{w}|^{2+\alpha + \beta}}
  \, dA(w)
  < C (1-|z|^2)^{-\beta}
\]
for $-1 < \alpha < \infty$ and $\beta > 0$ \cite[Theorem 1.7]{Zhu_Ap}.
This implies that the $A^q$ norm of $K_{\lambda,a,q}$ is bounded by a
constant depending on $q$ and $a$ but not on $\lambda$. 
\begin{proof}
  The fact that \eqref{condii} implies \eqref{condi} is by definition and the
  fact that $A^{p(\cdot)} \subset A^q$.  
The fact that \eqref{condi} implies \eqref{condii} is by the closed 
graph theorem. 
The fact that \eqref{condiii} implies \eqref{condii} is from 
Lemma \ref{lemma:growth}.  
It is clear that \eqref{condiv} implies \eqref{condiii} since 
integral means are increasing.  The equivalence of 
\eqref{condiv}, \eqref{condv} and \eqref{condvi} follows from 
Theorem \ref{thm:incmult}.
The fact that \eqref{condii} implies \eqref{condviii} is clear, and
\eqref{condviii} trivially implies \eqref{condvii}. 

We will be done if we can show that 
\eqref{condvii} implies \eqref{condvi}. Let $0<r<1$ and let
$K_r = K_{r,a,q}$ for some $a$ such that $aq > 2$. 
Note that 
\begin{equation}\label{eq:kernel_intmeanineq}
  \begin{split}
  M_p^p(\rho, K_r) &\asymp \frac{(1-r^2)^{(aq-2)p/q}}{(1-r \rho)^{ap-1}}
\\
M_q^q(\rho, K_r) &\asymp \frac{(1-r^2)^{a q-2}}{(1-r \rho)^{a q -1}}
\end{split}
\end{equation}
by a well known estimate \cite[p.\ 84, Lemma 3]{D_Hp}. 
In fact, by Lemma \ref{lemma:poissonbound}, for $s > 1$ one has 
\[
(1-r^2)^{1-s} \leq 
  \frac{1}{2\pi} \int_0^{2\pi} \frac{1}{|1-re^{i\theta}|^s} \, d\theta 
\leq \frac{\Gamma(s-1)}{\Gamma(s/2)^2} (1-r^2)^{1-s}.
\]
Because $a p$ and $a q$ are bounded away from $1$ and 
$\infty$, taking $s = a p$ or $s = a q$ in the above expression
shows that the implied constants
in \eqref{eq:kernel_intmeanineq} are independent of $r$ and 
$\rho$. 
So 
\[
M_{p(\rho)}^{p(\rho)}(\rho, K_r) \asymp M_q^q(\rho,K_r) 
   \frac{(1-r^2)^{(aq-2)[(p/q)-1]}}{(1-r \rho)^{ap-aq}}
\]
Let %
$r' = (r+1)/2$.  
For $r \leq \rho \leq r'$ we have that 
\[
\begin{split}
1-r'^2 &\leq 1 - r \rho \leq 1 - r^2 \leq 2(1-r'^2), \\
1-r'^2 &\leq 1 - \rho^2 \leq 1 - r^2 \leq 2(1-r'^2) .
\end{split}
\]
Thus we have that $1-r^2 \asymp 1-r\rho \asymp 1-\rho^2 \asymp 1-\rho$.
Therefore
\[
 \frac{(1-r^2)^{(aq-2)[(p/q)-1]}}{(1-r \rho)^{ap-aq}}
\asymp
\frac{1}{(1-\rho^2)^{2(p-q)/q}}
\]
where the implied constant is independent of $\rho$. 
We also have for $\rho \geq r$ that 
$M_q^q(\rho, K_r) \geq C(1-r)^{-1}$ for some 
constant $C$.  Thus
\[
\frac{2}{1-r} \int_{r}^{r'} \frac{1}{(1-\rho)^{2[p(\rho)-q]/q}} d \rho \leq 
 C \frac{2}{1-r} \int_{r}^{r'} \frac{(1-r^2)^{(aq-2)[p(\rho)/q-1]}}
     {(1-r\rho)^{ap(\rho)-aq}}
     d \rho.
 \]
 But this is at most
 \[
   \begin{split}
&\phantom{{}={}} C \frac{2}{1-r} \int_{r}^{r'} \frac{(1-r^2)^{(aq-2)[p(\rho)/q-1]}}
     {(1-r\rho)^{a p(\rho)-a q}}
     M_q^q(\rho,K_r)(1-r) d \rho \\
     &\leq
     C \int_r^{r'} M_{p(\rho)}^{p(\rho)}(\rho, K_r) \, d\rho \\
     &\leq
     C \int_r^{r'} M_{p(\rho)}^{p(\rho)}(\rho, K_r) \rho \, d\rho
     \leq C K.
   \end{split}
\] 
This finishes the proof.
\end{proof}

The following corollary is clear.
\begin{corollary}
Let $p(\cdot)$ be as in Theorem \ref{thm:radial}.  
If $(1-r)^{q-p(r)}$ is bounded then $A^q = A^{p(\cdot)}$.
\end{corollary}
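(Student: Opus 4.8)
The plan is to reduce the claim to one of the equivalent conditions in Theorem \ref{thm:radial}: by that theorem it suffices to verify any single one of the statements \eqref{condi}--\eqref{condviii}, and the most convenient to check is condition \eqref{condv}, which asks for a constant $K$ with
\[
\frac{1}{x} \int_{1-x}^1 \frac{dr}{(1-r)^{2(p(r)-q)/q}} \leq K
\]
for all $0 < x \leq 1$. The strategy is to bound the integrand uniformly in $r$ using the hypothesis, after which the averaged integral is controlled trivially.

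First I would observe that the integrand can be rewritten as
\[
\frac{1}{(1-r)^{2(p(r)-q)/q}} = (1-r)^{(2/q)(q-p(r))} = \left[(1-r)^{q-p(r)}\right]^{2/q}.
\]
Since $p(r) \geq q$ and $0 < 1-r \leq 1$, the quantity $(1-r)^{q-p(r)}$ is at least $1$ and, by hypothesis, is bounded above by some constant $M$. Raising to the positive power $2/q$ then gives the pointwise bound $(1-r)^{-2(p(r)-q)/q} \leq M^{2/q}$ for every $r \in [0,1)$.

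With this uniform estimate in hand the remaining step is immediate: for every $0 < x \leq 1$,
\[
\frac{1}{x} \int_{1-x}^1 \frac{dr}{(1-r)^{2(p(r)-q)/q}} \leq \frac{1}{x} \int_{1-x}^1 M^{2/q} \, dr = M^{2/q},
\]
so condition \eqref{condv} holds with $K = M^{2/q}$, and Theorem \ref{thm:radial} yields $A^q = A^{p(\cdot)}$. I do not expect any genuine obstacle here: the whole argument is the algebraic identity that converts the hypothesis on $(1-r)^{q-p(r)}$ into a pointwise bound on the integrand, together with the observation that Theorem \ref{thm:radial} has already done the substantive work of tying this averaged condition to the equality of the two Bergman spaces.
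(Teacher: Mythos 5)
Your proof is correct and matches the paper's intended argument: the paper calls this corollary ``clear'' precisely because the hypothesis makes the integrand $\bigl[(1-r)^{q-p(r)}\bigr]^{2/q}$ uniformly bounded, so condition \eqref{condv} of Theorem \ref{thm:radial} holds trivially, exactly as you verify. No gaps; your write-up just makes explicit what the paper leaves implicit.
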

This corollary implies that Theorem \ref{thm:logholder_equiv} holds in
the special case of log-H\"{o}lder continuous radial exponents,  but it
also applies in cases where that theorem does not. 

The following surprising corollary also follows. 
\begin{corollary}\label{cor:limsup_p_big}
Let $P > q$ be given.  There is a radial function $p(r) \geq q$ such that 
$\limsup_{0 \leq r < 1} p(r) = P$ and $A^{p(\cdot)} = A^q$. 
\end{corollary}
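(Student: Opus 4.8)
The plan is to lean entirely on the equivalence of Theorem \ref{thm:radial}: rather than construct $p$ and then prove $A^q = A^{p(\cdot)}$ directly, I would exhibit a radial exponent $p(r) \in [q,P]$ with $\limsup_{r\to 1^-} p(r) = P$ for which the maximal-type condition \eqref{condvi} holds, and then quote that \eqref{condvi} implies \eqref{condi}. Writing $\beta = 2(P-q)/q > 0$, the integrand appearing in \eqref{condvi}, namely $(1-r)^{-2(p(r)-q)/q}$, equals $(1-r)^{-\beta}$ wherever $p(r)=P$ and equals $1$ wherever $p(r)=q$. The whole tension is that $(1-r)^{-\beta}$ blows up at the boundary, so the construction must force $p$ to equal $P$ only on a set that is, scale by scale near $r=1$, thin enough to absorb that blow-up.

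Concretely I would take $p$ to assume only the two values $q$ and $P$, setting $p(r)=P$ on a sequence of short intervals accumulating at $1$ and $p(r)=q$ elsewhere, which makes $p$ radial with $p\ge q$ and $\esssup p = P < \infty$, as the hypotheses of Theorem \ref{thm:radial} require. In the variable $t = 1-r$ I place the $n$-th interval at $J_n = [\,2^{-n},\,2^{-n}+2^{-n(1+\beta)}\,]$ for $n\ge 1$; these are disjoint (since $2^{-n\beta}<1$) and sit inside the dyadic annulus $\{t\asymp 2^{-n}\}$. The exponent $1+\beta$ is the crux of the design: on $J_n$ one has $t^{-\beta}\asymp 2^{n\beta}$, while $|J_n| = 2^{-n(1+\beta)}$, so the product $|J_n|\,2^{n\beta}\asymp 2^{-n}$ — the singular factor is exactly defeated by the smallness of $|J_n|$. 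Transcribing back to $r$, the sets where $p=P$ are intervals of $r$-values tending to $1$, so $p(r)\le P$ everywhere with $p(r)=P$ arbitrarily close to $1$; hence $\limsup_{r\to 1^-} p(r) = P$, as desired.

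The remaining and only substantial step is the verification of \eqref{condvi}, which I expect to be the main obstacle (albeit a routine one once the scaling is right). For fixed $0<x\le 1$ I would split the octave $\{t\in[x/2,x]\}$ into the part $E$ where $p=P$ and its complement. The complement contributes at most $x/2$ to $\int_{x/2}^x t^{-2(p(1-t)-q)/q}\,dt$, since the integrand is $1$ there. For $E$, the key observation is that only boundedly many $J_n$ can meet a single octave $[x/2,x]$, namely those with $2^{-n}\asymp x$, so that $|E|\le C x^{1+\beta}$; combined with $t^{-\beta}\le (x/2)^{-\beta}$ on $E$, the $E$-contribution is at most $C x^{1+\beta}(x/2)^{-\beta}\le C' x$. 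Dividing by $x/2$ yields a bound independent of $x$, which is precisely \eqref{condvi}. Theorem \ref{thm:radial} then gives $A^{p(\cdot)}=A^q$, completing the argument. The only care needed beyond direct substitution is the interval count and the elementary estimate $|E|\lesssim x^{1+\beta}$.
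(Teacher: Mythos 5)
Your proposal is correct and takes essentially the same route as the paper: the paper also builds a two-valued radial exponent equal to $P$ on short intervals placed at dyadic distances $2^{-n}$ from the boundary, with the lengths chosen (implicitly, via the $R_n$) precisely so that each interval contributes at most $2^{-n}$ to the singular integral, and then concludes via the maximal-type condition in Theorem \ref{thm:radial}. Your only deviations are cosmetic: you make the interval length $2^{-n(1+\beta)}$ explicit and verify condition \eqref{condvi}, whereas the paper verifies the condition in the form \eqref{condv}.
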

\begin{proof}
Define $p(r)$ so that for $n \geq 1$ we have 
\[
p(r) = \begin{cases} P \text{ if $1 - 2^{-n+1} \leq r < R_n$} \\ 
                     q \text{ if $R_n \leq r < 1-2^{-n}$}
\end{cases}
\]
where $R_n$ is chosen close enough to $1-2^{-n+1}$ so that 
\[
\int_{1-2^{-n+1}}^{R_n} (1-r)^{2(q-P)/q} \, dr < 1/2^n.
\]
Then 
\[
\int_{1-2^{-n+1}}^1 (1-r)^{2(q-p(r))/q} \, dr < 2^{-n+1},
\]
so for any $0 < x \leq 1$ we have 
\[
\frac{1}{x} \int_{1-x}^1 \frac{1}{(1-r)^{2(p(r)-q)/q}} \, dr < 2.
\]
\end{proof}

The following corollary also follows.  
\begin{corollary}
  For $0 <q < \infty$ there is a radial exponent $p(\cdot)$ such that
  $p(r) \rightarrow q$ as $r \rightarrow 1$ but that
  $A^{p(\cdot)} \neq A^{q}$.
\end{corollary}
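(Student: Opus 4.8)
The plan is to exhibit a radial exponent for which $p(r)\to q$ as $r\to1$ but for which condition \eqref{condvi} of Theorem \ref{thm:radial} fails; by the equivalences established there, the failure of \eqref{condvi} forces $A^{p(\cdot)}\neq A^q$. Since $A^{p(\cdot)}\subseteq A^q$ always holds, this is exactly the desired conclusion. Concretely, I would build $p$ out of a sequence of shrinking \emph{spikes} above $q$ placed on dyadic rings. Fix a sequence $\delta_n>0$ with $\delta_n\to0$ but $n\delta_n\to\infty$ (for instance $\delta_n=n^{-1/2}$), and on the ring $R_n=[1-2^{-n+1},\,1-2^{-n})$ set $p(r)=q+\delta_n$ on its left half $[1-2^{-n+1},\,1-\tfrac32 2^{-n})$ and $p(r)=q$ on the remainder. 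Then $q\le p(r)\le q+\delta_1<\infty$, the exponent is radial, and for $r\in R_n$ we have $|p(r)-q|\le\delta_n$, so $p(r)\to q$ as $r\to1$; thus the hypotheses of Theorem \ref{thm:radial} are met.

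Next I would test the average in \eqref{condvi} along the particular scales $x=2^{-n+1}$, for which $[1-x,\,1-x/2]$ is exactly the ring $R_n$. On the left half of $R_n$ we have $1-r\asymp 2^{-n}$, so there $(1-r)^{-2(p(r)-q)/q}\ge (2^{-n+1})^{-2\delta_n/q}=2^{2(n-1)\delta_n/q}$, and that half has length $\tfrac12 2^{-n}$. Hence
\[
\frac{2}{x}\int_{1-x}^{1-x/2}\frac{dr}{(1-r)^{2(p(r)-q)/q}}
\;\ge\; 2^{n}\cdot\tfrac12 2^{-n}\cdot 2^{2(n-1)\delta_n/q}
\;=\;\tfrac12\,2^{2(n-1)\delta_n/q}\;\longrightarrow\;\infty,
\]
since $(n-1)\delta_n=n\delta_n-\delta_n\to\infty$. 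Therefore the supremum over $x\in(0,1]$ of this average is infinite, so \eqref{condvi} fails.

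Finally, by the equivalence of \eqref{condi} and \eqref{condvi} in Theorem \ref{thm:radial}, the failure of \eqref{condvi} yields $A^{p(\cdot)}\neq A^q$, which completes the construction. The only delicate point is the simultaneous requirement that $p(r)\to q$ (forcing $\delta_n\to0$) and that the averages blow up (forcing the spikes to remain "tall" on the logarithmic scale, i.e. $n\delta_n\to\infty$); the choice $\delta_n=n^{-1/2}$ balances these two competing asymptotics, and any $\delta_n\to0$ with $n\delta_n\to\infty$ would work equally well. I expect no genuine obstacle beyond verifying that these asymptotics are compatible, which the displayed lower bound confirms.
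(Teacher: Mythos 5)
Your proof is correct, and while it runs through the same gateway as the paper --- negating one of the averaging conditions of Theorem \ref{thm:radial} for an exponent with $p(r)\to q$ --- the witness is genuinely different. The paper takes the single smooth exponent $p(r)=q+(q/2)[-\log(1-r)]^{-1/2}$, for which $(1-r)^{-2(p(r)-q)/q}=\exp[(\log\tfrac{1}{1-r})^{1/2}]$ is increasing and unbounded; the failure of condition \eqref{condv} is then a one-line consequence of monotonicity, since the average of an increasing function over $[1-x,1)$ is at least its value at $1-x$. You instead build a piecewise-constant exponent out of dyadic spikes of height $\delta_n$ and test condition \eqref{condvi} at the matching scales $x=2^{-n+1}$; your lower bound $\tfrac12\,2^{2(n-1)\delta_n/q}\to\infty$ is computed correctly, and the two-parameter balance ($\delta_n\to0$ to force $p(r)\to q$, $n\delta_n\to\infty$ to force blow-up) is exactly the right tension to manage. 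What your route buys: it shows no continuity of $p$ is needed, and it isolates a scaling threshold --- spikes of height $\delta_n$ at depth $2^{-n}$ destroy the equivalence precisely when $n\delta_n\to\infty$ --- which is sharper structural information than a single formula. What the paper's route buys is economy: monotonicity eliminates any case analysis over scales. One small remark: you verify the failure of \eqref{condvi} only along the sequence $x=2^{-n+1}$, but that suffices, since \eqref{condvi} demands a bound uniform over all $x\in(0,1]$.
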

\begin{proof}
  Let $p(r) = q + (q/2)[-\log(1-r)]^{-1/2}$.  Then
  \[
    \frac{1}{(1-r)^{2(p(r)-q)/q}} =
    \exp \left[\left( \log \frac{1}{1-r} \right)^{1/2}\right]
  \]
  which is an increasing function that is unbounded as $r \rightarrow 1$.
  Thus its average value on intervals of the form $[r,1)$ is unbounded. 
\end{proof}

\providecommand{\bysame}{\leavevmode\hbox to3em{\hrulefill}\thinspace}
\providecommand{\MR}{\relax\ifhmode\unskip\space\fi MR }
\providecommand{\MRhref}[2]{%
  \href{http://www.ams.org/mathscinet-getitem?mr=#1}{#2}
}
\providecommand{\href}[2]{#2}

\end{document}